\newcommand{\de}{\delta}
\newcommand{\CC}{{\mathbb{C}}}
\newcommand{\PP}{{\mathbb{P}}}
\newcommand{\ZZ}{{\mathbb{Z}}}
\newcommand{\calO}{{\mathcal O}}
\newcommand{\calG}{{\mathcal G}}
\newcommand{\calH}{{\mathcal H}}
\newcommand{\calA}{{\mathcal A}}
\newcommand{\calJ}{{\mathcal J}}
\newcommand{\m}{\mathfrak{m}}
\newcommand{\n}{\mathfrak{n}}
\newcommand{\q}{\mathfrak{q}}
\newcommand{\ee}{\mathfrak{e}}
\DeclareMathOperator{\red}{red}
\newcommand{\op}{\operatorname}
\newcommand{\Sing}{\op{Sing}}
\newcommand{\mult}{\op{mult}}
\newcommand{\len}{\op{length}}
\newcommand\codim{{\rm codim}}
\theoremstyle{plain}
\newtheorem{thm}{Theorem}[section]
\newtheorem{lm}[thm]{Lemma}
\newtheorem{prop}[thm]{Proposition}
\newtheorem{cor}[thm]{Corollary}
\newtheorem{conj}[thm]{Conjecture}
\newtheorem{question}[thm]{Question}
\theoremstyle{definition}
\newtheorem{example}[thm]{Example}
\newtheorem{df}[thm]{Definition}
\newtheorem{rem}[thm]{Remark}
\begin{document}
\title{The degree of the Gauss map of the theta divisor}
\author{Giulio Codogni}
\address{Universit\`a Roma Tre, Dipartimento di Matematica e Fisica, Largo San Leonardo Murialdo I-00146 Roma, Italy}
\email{codogni@mat.uniroma3.it}
\author{Samuel Grushevsky}
\address{Mathematics Department, Stony Brook University, Stony Brook, NY 11794-3651, USA}
\email{sam@math.stonybrook.edu}
\thanks{The first author is funded by the FIRB 2012 ``Moduli spaces and their applications" and the ``ERC StG 307119 - Stab AGDG". Research of the second author is supported in part by National Science Foundation under the grant DMS-15-01265, and by a Simons Fellowship in Mathematics (Simons Foundation grant \#341858 to Samuel Grushevsky)}
\author{Edoardo Sernesi}
\address{Universit\`a Roma Tre, Dipartimento di Matematica e Fisica, Largo San Leonardo Murialdo I-00146 Roma, Italy}
\email{sernesi@mat.uniroma3.it}

\begin{abstract}
We study the degree of the Gauss map of the theta divisor of principally polarised complex abelian varieties. Thanks to this analysis, we obtain a bound on the multiplicity of the theta divisor along irreducible components of its singular locus. We spell out this bound in several examples, and we use it to understand the local structure of isolated singular points. We further define a stratification of the moduli space of ppav's by the degree of the Gauss map. In dimension four, we show that this stratification gives a weak solution of the Schottky problem, and we conjecture that this is true in any dimension.
\end{abstract}
\maketitle

\section{Introduction}
Let $(A,\Theta)$ be a complex $g$-dimensional principally polarised abelian variety (ppav), where, by abuse of notation, we write $\Theta$ both for an actual symmetric divisor of $A$ and for its first Chern class. The Gauss map
$$\calG:\Theta\dashrightarrow\PP^{g-1}$$
is the rational map given by the complete linear system $L:=\calO(\Theta)|_\Theta$. If $(A,\Theta)$ is indecomposable (not a product of lower-dimensional ppav's), then $\calG$ is a rational dominant generically finite map, and we are interested in its degree. Since a basis of $H^0(\Theta,L)$ is given by the partial derivatives of the theta function restricted to the theta divisor, the scheme-theoretic base locus of the Gauss map is equal to the singular locus $\Sing(\Theta)$, endowed with the scheme structure defined by the partial derivatives of the theta function.

Given an irreducible component $V$ of $\Sing(\Theta)$, we denote by $V_{red}$ the reduced scheme structure on $V$, and then denote $\mult_{V_{red}}\Theta$ the vanishing multiplicity of the theta function along  $V_{red}$. We denote by $\deg_V\Theta$ the intersection number $\Theta^d\cdot V$  on $A$, where $d=\dim V$; note that  $\deg_V\Theta\ge 1$ for any $V$, since the theta divisor is ample. Our main result relates the multiplicity of the singularities of the theta divisor with the degree of the Gauss map.
\begin{thm}\label{thm:multbound}
Let $(A,\Theta)$ be an indecomposable principally polarized abelian variety of dimension $g\ge 3$.
Let $\{V_i\}_{i\in I}$ be the set of irreducible components of  $\Sing(\Theta)$; denote $d_i:=\dim V_i$, and $m_i:=\mult_{V_{i,red}}\Theta$. Then the following inequalities hold:
$$
 \sum_{i\in I} m_i(m_i-1)^{g-d_i-1}\deg_{V_{i,\red}}\Theta\leq g!-\deg \calG     \le g! -4,
$$
where $\deg \calG$ is the degree of the Gauss map.
\end{thm}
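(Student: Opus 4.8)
The plan is to prove the two inequalities separately. For the left-hand bound I would read $\deg\calG$ intersection-theoretically. Since $\partial_1\theta,\dots,\partial_g\theta$ restrict to a basis of $H^0(\Theta,L)$, pulling back $g-1$ general hyperplanes of $\PP^{g-1}$ produces $g-1$ general members $D_1,\dots,D_{g-1}\in|L|$ whose common base locus is exactly $\Sing(\Theta)$. By adjunction and the projection formula, $\int_\Theta c_1(L)^{g-1}=\Theta^g=g!$ (the last equality because the polarization is principal), so the $0$-cycle $D_1\cdots D_{g-1}\cdot[\Theta]$ has total degree $g!$. Its part supported on the smooth locus $\Theta\smallsetminus\Sing(\Theta)$ is precisely $\calG^{-1}(\text{general point})$, of cardinality $\deg\calG$, while the remaining mass sits over $\Sing(\Theta)=\bigcup_i V_i$. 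Hence $g!-\deg\calG$ equals the sum of the local contributions of the components $V_i$ to this intersection; as all such contributions are nonnegative, it suffices to bound each one from below.

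To estimate the contribution of a fixed $V_i$ I would localize at a general point $p$ of $V_{i,\red}$ and pass to a transverse slice. A transverse slice to $V_{i,\red}$ in $A$ is a germ of dimension $g-d_i$ meeting $\Theta$ in a hypersurface germ $(\{f=0\},0)\subset(\CC^{g-d_i},0)$ with an isolated singular point of multiplicity $m_i$. Of the $g-1$ general divisors I would use $d_i$ of them to cut $V_{i,\red}$ in $\deg_{V_{i,\red}}\Theta=\Theta^{d_i}\cdot V_{i,\red}$ reduced points; near each such point the remaining $c_i:=g-1-d_i$ divisors restrict in the slice to hypersurfaces $\{\ell_j(\nabla\theta)=0\}$, each vanishing to order at least $m_i-1$ at $0$ (since every $\partial_k\theta$ vanishes to order $m_i-1$ along $V_i$), for general linear forms $\ell_j$. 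Together with $\{f=0\}$ these are $c_i+1$ hypersurface germs in $\CC^{c_i+1}$ meeting only at $0$, so by the standard estimate that a local intersection multiplicity at an isolated point is at least the product of the multiplicities of the factors, the local contribution at $p$ is at least $m_i(m_i-1)^{c_i}$. Summing over the $\deg_{V_{i,\red}}\Theta$ points and over $i$ gives $g!-\deg\calG\ge\sum_i m_i(m_i-1)^{g-d_i-1}\deg_{V_{i,\red}}\Theta$.

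For the right-hand inequality I would show $\deg\calG\ge4$. As $\theta$ is even, $\nabla\theta(-x)=-\nabla\theta(x)$, so $\calG(-x)=\calG(x)$ and $\calG$ factors as $\calG=\bar\calG\circ\rho$, where $\rho:\Theta\dashrightarrow\Theta/\{\pm1\}$ has degree $2$ (the involution is not the identity on $\Theta$) and $\bar\calG$ is generically finite onto $\PP^{g-1}$. Thus $\deg\calG=2\deg\bar\calG$ is even, and it remains to exclude $\deg\bar\calG=1$. By adjunction $K_\Theta=L$ is ample, so $\Theta$ is of general type, and by the ramification formula $\Theta/\{\pm1\}$ is again of general type, of dimension $g-1\ge2$; in particular it is not birational to $\PP^{g-1}$, so $\bar\calG$ is not birational and $\deg\bar\calG\ge2$. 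Therefore $\deg\calG\ge4$, equivalently $g!-\deg\calG\le g!-4$.

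The routine ingredients — the computation $\int_\Theta c_1(L)^{g-1}=g!$, the evenness of the degree, and the general-type argument — are straightforward. The delicate step is the localization in the first inequality: one must show rigorously that the excess intersection of the $g-1$ general divisors concentrates on $\Sing(\Theta)$ with a contribution that splits, near a general point of $V_i$, as (number of slice points along $V_{i,\red}$) times (a transverse local multiplicity), and that this transverse multiplicity is the intersection number of the $c_i+1$ germs above. Making this uniform along $V_i$ and justifying the genericity — so that $V_{i,\red}$ meets the chosen linear section in $\deg_{V_{i,\red}}\Theta$ reduced points, away from the other components and from the locus where the transverse multiplicity jumps — is the main obstacle; I would handle it via a Bertini/general-position argument together with a refined intersection formula assigning each connected component of the intersection a nonnegative, locally computable contribution.
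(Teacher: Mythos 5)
Your proposal has the right skeleton (it mirrors the paper's: bound the ``missing degree'' $g!-\deg\calG$ from below by contributions of the components of $\Sing(\Theta)$, and separately show $\deg\calG\ge 4$ via the involution), but the decisive step of the first inequality fails. You propose to ``use $d_i$ of the $g-1$ general divisors to cut $V_{i,\red}$ in $\deg_{V_{i,\red}}\Theta$ reduced points.'' This is impossible: the divisors $D_j$ are pullbacks of hyperplanes under $\calG$, i.e.\ zero loci of sections $\ell_j(\nabla\theta)|_\Theta$ of the Gauss linear system, and every such section vanishes identically on the base locus $\Sing(\Theta)$, hence on every $V_i$. So \emph{every} member of the system contains $V_{i,\red}$, no member meets $V_{i,\red}$ properly, and the decomposition ``(points of $V_{i,\red}$ cut by $d_i$ divisors) $\times$ (transverse local multiplicity)'' does not exist in the form you state. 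The number $\deg_{V_{i,\red}}\Theta$ is an intersection number computed with general translates of $\Theta$ (or cohomologically), and such representatives are not available inside the linear system defining $\calG$; this is not a Bertini/general-position technicality but exactly the excess-intersection difficulty the theorem is about. Your fallback (a refined intersection formula assigning nonnegative contributions to connected components, \`a la Fulton) does exist, but it expresses the contribution of $B_i$ as $\int_{B_i}(1+c_1(L))^{g-1}\cap s(B_i,\Theta)$, whose leading term is the desired $\ee_i\deg_{V_{i,\red}}\Theta$ while the lower-order Segre terms carry signs; bounding the full contribution below by its leading term is precisely what does not come for free. The paper's route around this is Vogel's $v$-cycle algorithm: intersect one general Gauss divisor at a time with the \emph{residual} cycle of the previous step (always a proper intersection, since residual cycles are never contained in the base locus), split off the effective cycle supported in the base locus, identify its multiplicity along $V_i$ with the Samuel multiplicity $\ee_i=\mult_{B_i}\Theta$ via Proposition~\ref{CAtheorem} (using that $\Theta$ is Cohen--Macaulay), and only then evaluate $\deg_L$ of the class $\ee_i[V_{i,\red}]$ in the Chow group; effectivity of the $v$-cycles is what makes discarding all other terms legitimate (Corollary~\ref{bound_disc}). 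Your transverse-slice estimate itself (partials vanish to order $m_i-1$, so the isolated local intersection number is at least $m_i(m_i-1)^{g-d_i-1}$) is sound and is the analogue of the paper's Proposition~\ref{bound_Samuel}; what is missing is the bridge $g!-\deg\calG\ge\sum_i\ee_i\deg_{V_{i,\red}}\Theta$.

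On the right-hand inequality, your argument follows the paper's (evenness from $\iota:z\mapsto -z$, then excluding degree $2$ by showing $\Theta/\iota$ is not rational), but the justification ``by the ramification formula $\Theta/\{\pm1\}$ is again of general type'' is not valid as stated: quotients of general-type varieties by involutions with a ramification divisor need not be of general type (a genus-$2$ curve modulo its hyperelliptic involution is $\PP^1$). Here one must use that the fixed locus of $\iota$ on $\Theta$ consists of finitely many $2$-torsion points, so the quotient map is \'etale in codimension one, and one must also handle the singularities of $\Theta$ and of the quotient when passing to resolutions. The paper sidesteps all of this by exhibiting non-zero $\iota$-invariant holomorphic $2$-forms on $\Theta$ (restrictions of $2$-forms from $A$, which are invariant because they have even degree), which descend and obstruct rationality of $\Theta/\iota$. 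This part of your proposal is repairable; the gap in the first inequality is the substantive one.
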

In Section~\ref{section:multi} we discuss the improvements of this result in various cases, by using the known lower bounds on the degree of the theta divisor on subschemes of $A$. In particular, as an easy corollary we obtain a special case of a well-known result of Ein and Lazarsfeld~\cite{EinLaz}.
\begin{cor}\label{smooth_cod_1}
For any indecomposable ppav $(A,\Theta)$ such that the algebraic cohomology group $HA^{2,2}(A)=\ZZ$, the divisor~$\Theta$ is smooth in codimension~$1$.
\end{cor}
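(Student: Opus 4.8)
\emph{Proof proposal.} The plan is to deduce the corollary from Theorem~\ref{thm:multbound} by contradiction. Since $\dim\Theta=g-1$, the statement that $\Theta$ is smooth in codimension~$1$ means exactly that $\Sing(\Theta)$ has no irreducible component of dimension $g-2$. I would therefore suppose that some component $V_i$ of $\Sing(\Theta)$ satisfies $d_i:=\dim V_i=g-2$, isolate the single summand indexed by this $i$ on the left-hand side of Theorem~\ref{thm:multbound}, and, using that all summands are nonnegative, show that this one summand already exceeds the upper bound $g!-4$.

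First I would evaluate the exponent. For $d_i=g-2$ one has $g-d_i-1=1$, so the $i$-th summand reduces to $m_i(m_i-1)\deg_{V_{i,\red}}\Theta$. Because $V_i\subseteq\Sing(\Theta)$, the theta function vanishes to order at least~$2$ along $V_{i,\red}$, whence $m_i\ge 2$ and $m_i(m_i-1)\ge 2$.

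The core of the argument is the lower bound on $\deg_{V_{i,\red}}\Theta=\Theta^{g-2}\cdot V_i$ provided by the cohomological hypothesis. As $V_i$ has codimension~$2$ in $A$, its fundamental class lies in $HA^{2,2}(A)$. In a symplectic basis one computes $\tfrac{1}{2}\Theta^2=\sum_{a<b}(x_a\wedge y_a)\wedge(x_b\wedge y_b)$, which is integral and primitive in $H^4(A,\ZZ)$, hence a generator of the saturated subgroup $HA^{2,2}(A)=\ZZ$. Since $[V_i]$ is an effective algebraic class and $\Theta$ is ample, we must have $[V_i]=n\cdot\tfrac{1}{2}\Theta^2$ with $n\ge 1$, and therefore $\deg_{V_{i,\red}}\Theta=\tfrac{n}{2}\,\Theta^g=\tfrac{n}{2}\,g!\ge \tfrac{g!}{2}$.

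Combining the two estimates, the $i$-th summand is at least $2\cdot\tfrac{g!}{2}=g!$, contradicting the bound $g!-4$ of Theorem~\ref{thm:multbound}. Hence no component of dimension $g-2$ can occur, which is the assertion. I expect the only delicate point to be the cohomological input, namely verifying that $HA^{2,2}(A)=\ZZ$ is generated by the minimal class $\tfrac{1}{2}\Theta^2$, so that effectivity of $[V_i]$ forces an integer coefficient $n\ge 1$ rather than a fractional multiple; once that is in place the remainder is a direct numerical comparison.
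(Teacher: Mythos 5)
Your proposal is correct and takes essentially the same route as the paper: there, Corollary~\ref{smooth_cod_1} is obtained as the case $d=g-2$ of Corollary~\ref{bound_hodge}, which combines Theorem~\ref{thm:multbound} with precisely your lower bound $\deg_{V_{\red}}\Theta\ge g!/(g-d)!$, derived from the fact that when $HA^{g-d,g-d}(A)=\ZZ$ the class of $V$ must be a positive integral multiple of the primitive minimal class $\frac{1}{(g-d)!}\Theta^{g-d}$. The ``delicate point'' you flag is handled just as tersely in the paper (a parenthetical remark plus the observation that $V$ has positive intersection with $\Theta^d$), so your argument matches the paper's in both substance and level of detail.
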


\begin{rem}
For the case of an isolated singular point $z\in\Sing(\Theta)$ of multiplicity $m$, the  bound of Theorem~\ref{thm:multbound}   gives
$m(m-1)^{g-1}\le g!-4$, which asymptotically for $g\to\infty$ behaves like $m\le g/e$, where $e$ is the Euler number.

Recall that the inequality $\mult_z\Theta\le g$ at any point of any ppav is a famous result of Koll\'ar~\cite{kollar}, while Smith and Varley~\cite{smva} prove that there exists a point $z$ such that $\mult_z\Theta=g$, if and only if the ppav is the product of $g$ elliptic curves. Conjecturally (see the discussion in~\cite{Sam}) for indecomposable ppav's the multiplicity of the theta divisor at any point is at most $\lfloor \frac{g+1}{2}\rfloor$ --- this is the maximum possible multiplicity for Jacobians of smooth curves. The bound we get for multiplicity of isolated singular points is thus asymptotically better than this conjecture.
\end{rem}
Since the multiplicity $m_i\ge 2$, we also get the following
\begin{cor}\label{iso_points}
The number of isolated singular points of the theta divisor is at most $(g!-4)/2$.
\end{cor}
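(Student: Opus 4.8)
The plan is to read off Corollary~\ref{iso_points} directly from the main inequality of Theorem~\ref{thm:multbound}, by isolating the contribution of the zero-dimensional components of $\Sing(\Theta)$, which are exactly the isolated singular points.

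First I would identify which components $V_i$ are the isolated singular points: these are precisely the ones with $d_i=\dim V_i=0$. For such a component, $V_{i,\red}$ is a single reduced point, so that $\deg_{V_{i,\red}}\Theta=\Theta^0\cdot V_i=1$. Substituting $d_i=0$ and this degree into the general term $m_i(m_i-1)^{g-d_i-1}\deg_{V_{i,\red}}\Theta$, the contribution of such an isolated point to the left-hand sum is exactly $m_i(m_i-1)^{g-1}$.

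Next I would use the fact, already recorded in the statement of Theorem~\ref{thm:multbound}, that the multiplicity of the theta function along any component of its singular locus satisfies $m_i\ge 2$. For an isolated singular point this gives $(m_i-1)^{g-1}\ge 1$ and hence $m_i(m_i-1)^{g-1}\ge 2$. Thus each isolated singular point contributes at least $2$ to the sum $\sum_{i\in I} m_i(m_i-1)^{g-d_i-1}\deg_{V_{i,\red}}\Theta$.

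Finally I would observe that every term of this sum is nonnegative (in fact $\ge 1$, since $m_i\ge 2$ and $\deg_{V_{i,\red}}\Theta\ge 1$), so retaining only the zero-dimensional components can only decrease the left-hand side. Writing $N$ for the number of isolated singular points, we therefore obtain $2N\le\sum_{i\in I} m_i(m_i-1)^{g-d_i-1}\deg_{V_{i,\red}}\Theta\le g!-4$ by Theorem~\ref{thm:multbound}, and dividing by $2$ yields $N\le(g!-4)/2$. There is no genuine obstacle here: the corollary is an immediate specialization of the theorem to the zero-dimensional strata, the only point requiring a moment's care being the verification that a reduced point has degree $1$ so that the exponent $g-d_i-1=g-1$ and the factor $\deg_{V_{i,\red}}\Theta=1$ combine to give the clean lower bound $2$.
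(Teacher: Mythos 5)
Your proof is correct and is essentially the paper's own argument: the paper derives Corollary~\ref{iso_points} from Theorem~\ref{thm:multbound} in exactly this way, noting that each isolated singular point contributes a term $m_i(m_i-1)^{g-1}\deg_{V_{i,\red}}\Theta\ge 2$ to the sum. One trivial quibble: the inequality $m_i\ge 2$ is not literally ``recorded in the statement of Theorem~\ref{thm:multbound}''; it holds because along any component of $\Sing(\Theta)$ the theta function and all its first partials vanish, so the vanishing multiplicity is at least $2$ (the paper also gives an alternative justification via the multiplicity-one criterion for the Samuel multiplicity in its Section~3 restatement, Corollary~\ref{bound_points}).
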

\begin{rem}
Recall that by definition a {\em vanishing theta-null} for a ppav is an even two-torsion point contained in the theta divisor. Conjecturally for an indecomposable ppav the number of vanishing theta-nulls is less than $2^{g-1}(2^g+1)-3^g$, and this conjecture was recently investigated in detail by  Auffarth, Pirola, Salvati Manni~\cite{aufpism}. The corollary above provides a better bound than this conjecture for all ppav's for genus up to $8$ (this is the range in which $(g!-4)/2<2^{g-1}(2^g+1)-3^g$ ) such that every vanishing theta-null is an isolated point of $\Sing(\Theta)$.
\end{rem}
We use the machinery of Vogel's $v$-cycles to obtain the proof of our result. As we only need to apply this machinery in a specific case, we give a direct elementary construction of $v$-cycles, and a self-contained argument for our results, not relying on the general $v$-cycle literature.

\subsection*{Related results and approaches}
Using the results of~\cite{Laz} and~\cite{Nakamaye} one can obtain a slightly weaker version of the bound on the multiplicity of singularities claimed in Theorem~\ref{thm:multbound}.  More recently, Musta\c t\u a and Popa applied their newly developed theory of Hodge ideals to also get a slightly weaker multiplicity bound, see~\cite[Sec.~29]{MuPo}. Neither of these works is directly related to the degree of the Gauss map. R.~Varley explained to us that Corollary~\ref{iso_points} can be obtained by extending the techniques from~\cite{smva} and utilizing the machinery developed in~\cite[section~4.4 and  chapter~11]{fultonbook}.

\bigskip
In the spirit of the Andreotti-Mayer loci defined as
$$
 N_k^{(g)}:=\lbrace (A,\Theta)\in\calA_g\,|\, \dim\Sing(\Theta)\ge k\rbrace\,,
$$
we define the {\em Gauss loci}
$$
 G_d^{(g)}:=\lbrace (A,\Theta)\in\calA_g\,|\, \deg\left(\calG:\Theta\dashrightarrow\PP^{g-1}\right)\le d\rbrace.
$$
In this spirit, we conjecture that the degree of the Gauss map gives a weak solution to the Schottky problem:
\begin{conj}\label{conj}
The Gauss loci  $G_d^{(g)}$ are closed for any $d$ and $g$. The closure of the locus of Jacobians of smooth curves $\overline\calJ_g$ is an irreducible component of $G_{d'}^{(g)}$, where $d'=\binom{2g-2}{g-1}$. The closure of the locus of Jacobians of smooth hyperelliptic curves $\overline\calH_g$ is an irreducible component of $G_{2^{g-1}}^{(g)}$.
\end{conj}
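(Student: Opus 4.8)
The plan is to treat the three assertions of Conjecture~\ref{conj} separately, since they rest on quite different inputs. For the \emph{closedness} of the loci $G_d^{(g)}$, the natural strategy is to prove that the function $(A,\Theta)\mapsto\deg\calG$ is upper semicontinuous on $\calA_g$, after removing the decomposable locus (where $\calG$ is not generically finite and must be excluded or handled by convention). The key point is the formula underlying Theorem~\ref{thm:multbound}: for a smooth theta divisor $\deg\calG=g!$, while in general $\deg\calG=g!$ minus a nonnegative correction supported on the base locus $\Sing(\Theta)$ and computed through the Vogel $v$-cycle construction. Under specialization in a family the singular locus can only grow and the local intersection multiplicities defining the $v$-cycles can only jump up, so this correction is lower semicontinuous, whence $\deg\calG$ is upper semicontinuous and $G_d^{(g)}=\{\deg\calG\le d\}$ is closed. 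I would make this precise by working over the base $S$ of a versal family of ppav's, building a relative Gauss map and a relative version of the $v$-cycle correction, and showing that the fiberwise degree drops on a closed subset of $S$.

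For the \emph{Jacobian} value $d'=\binom{2g-2}{g-1}$ I would use the geometric description of the Gauss map on $W_{g-1}\subset\Pic^{g-1}(C)$. By the Riemann--Kempf singularity theorem, at a smooth point corresponding to an effective divisor $D$ of degree $g-1$ with $h^0(D)=1$ the Gauss map records the hyperplane $\overline{D}\subset\PP^{g-1}=|K_C|^*$ spanned by $D$ in the canonical embedding. Computing $\deg\calG$ then reduces to counting the effective divisors of degree $g-1$ supported on a general hyperplane section of the canonical curve $C\subset\PP^{g-1}$: a general hyperplane meets the degree-$(2g-2)$ curve in $2g-2$ points, and choosing $g-1$ of them yields $\binom{2g-2}{g-1}$. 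For the \emph{hyperelliptic} value, the canonical map is two-to-one onto a rational normal curve of degree $g-1$, so a general hyperplane section consists of $g-1$ points each with two preimages on $C$, giving $2^{g-1}$.

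The genuinely hard part, and the reason the statement remains conjectural in general, is the \emph{irreducible component} assertion: one must show that $\overline\calJ_g$ (respectively $\overline\calH_g$) is not contained in any strictly larger irreducible subvariety of $\calA_g$ on which $\deg\calG\le d'$ (respectively $\le 2^{g-1}$). My approach would be deformation-theoretic: show that a generic first-order deformation of a Jacobian transverse to $\overline\calJ_g$ strictly increases $\deg\calG$, equivalently that it destroys enough of the special secant-plane incidence geometry of the canonical curve to shrink the $v$-cycle correction. Concretely one wants a neighborhood of a general Jacobian in which the locus $\{\deg\calG\le d'\}$ coincides with $\overline\calJ_g$. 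The obstacle is precisely controlling how the base-locus correction varies to first order in directions leaving $\overline\calJ_g$; a generic such deformation smooths $\Theta$ and pushes $\deg\calG$ back up to $g!$, but turning this heuristic into a proof that no \emph{other} degree-$\le d'$ direction exists is the delicate step.

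In dimension $g=4$, where the relevant degrees $d'=\binom{6}{3}=20$, $2^{3}=8$, and the generic value $g!=24$ are widely separated, this last step becomes tractable and yields the weak solution to the Schottky problem. Here $\overline\calJ_4$ is the Schottky locus, an irreducible divisorial component of the Andreotti--Mayer locus $N_0^{(4)}$, and $\overline\calH_4\subset\overline\calJ_4$ is the hyperelliptic sublocus. Combining the semicontinuity of the first step with the classification of the components of $N_0^{(4)}$ and a direct analysis of the strata of $\Sing(\Theta)$ and their degrees along $\overline\calJ_4$, one can rule out any enlarging deformation and thereby identify $\overline\calJ_4$ and $\overline\calH_4$ as irreducible components of $G_{20}^{(4)}$ and $G_{8}^{(4)}$ respectively.
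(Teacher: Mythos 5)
First, a framing point: the statement you set out to prove is a \emph{conjecture} in the paper; the authors themselves prove it only for $g\le 4$ (Proposition~\ref{prop:genus4close}), so no complete proof of the general statement exists there either. The genuine gap in your proposal is that it treats the first assertion --- closedness of the loci $G_d^{(g)}$ --- as a routine semicontinuity argument, when in fact that assertion is precisely the open content of the conjecture. Your claim that ``the local intersection multiplicities defining the $v$-cycles can only jump up'' under specialization is not justified and cannot be made so by the paper's machinery: Vogel's algorithm is run fiber by fiber with auxiliary generic choices of sections, no relative version over a base is constructed, and the discrepancy is not a sum of local multiplicities alone but of terms of the form $\ee_i\deg_{B_{i,\red}}\Theta$ (more precisely, degrees of $L$ on $v$-cycles), in which the number, dimension, and degree of the components of $\Sing(\Theta)$ can all change under specialization; no termwise monotonicity is available. (A minor side issue: your semicontinuity terminology is inverted --- closedness of the sublevel sets $G_d^{(g)}$ means $\deg\calG$ is \emph{lower} semicontinuous, equivalently the discrepancy is \emph{upper} semicontinuous --- but the real problem is that neither is proved.) Since your genus-$4$ argument explicitly ``combines the semicontinuity of the first step'' with classification results, it inherits this gap and does not yield the one case the paper actually establishes.

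For comparison, the paper's genus-$4$ proof avoids semicontinuity entirely. Using the classification of singularities of theta divisors of indecomposable fourfolds (Beauville~\cite{beauvillegenus4}, Grushevsky--Salvati Manni~\cite{grsmJac4}, Debarre~\cite{Debarreannulationdimension4}), it shows that off the Jacobian locus the only singularities are ordinary double points at two-torsion, each contributing exactly $2$ to the discrepancy by Proposition~\ref{defect_ak}, so that $\deg\calG=24-2k$ with $k$ the number of vanishing theta-nulls. Closedness then follows because theta-null vanishing loci are closed (being images of divisors on the level cover $\calA_4(4,8)$), with the delicate degeneration into the hyperelliptic locus ruled out via Igusa's lemma on azygetic triples~\cite{Igusa}; the component and distinctness statements come from Varley's fourfold~\cite{varleyweddle}, Debarre's uniqueness theorem, and local dimension counts near $A_V$. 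Your computations of the degrees $\binom{2g-2}{g-1}$ and $2^{g-1}$ for Jacobians via Riemann--Kempf are correct and agree with the paper's citation of Andreotti~\cite{torelli}, and you are right that the irreducible-component assertion is open in general; but your proposal leaves the closedness assertion equally open, and its genus-$4$ reduction does not reconstruct the paper's proof.
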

In making this conjecture, we recall that the degree of the Gauss map is known for all Jacobians of smooth curves: it is equal to $2^{g-1}$ if the curve is hyperelliptic, and to $\binom{2g-2}{g-1}$ if the curve is not hyperelliptic, see~\cite[Proof of  Prop.~10]{torelli}.

In Proposition~\ref{prop:genus4close}, we prove Conjecture~\ref{conj} for $g=4$ (after easily verifying it for any $g<4$). We also show that all relevant Gauss loci are distinct in genus 4, i.e.~that for any  $2\le d\le 12$ there exists an abelian fourfold such that the degree of its Gauss map is equal to $2d$.

Already when $g=5$, we expect not all the even degree Gauss loci $G_{2d}^{(5)}$ for $2\le d\le 60$ to be distinct, and in general the following natural question is completely open:
\begin{question}
For a given $g\geq 5$, what is the set of possible degrees of the Gauss map for $g$-dimensional ppav's? Equivalently, for which $d$ is $G_{2d}^{(g)}\setminus G_{2d-2}^{(g)}$ non-empty?
\end{question}

{\small \subsection*{Acknowledgments}
We thank P. Aluffi, R. Lazarsfeld, R. Salvati Manni, A. Verra and F. Viviani for many stimulating conversations, and R. Auffarth, T. Kr{\"a}mer, S. Schreider and R. Varley for valuable comments on a preliminary version of this paper.


\section{Generalities about the Gauss map and multiplicities of singularities}
Our paper is focused on studying the degree and the geometry of the Gauss map for abelian varieties. We first recall the general setup for working with generically finite rational maps to projective spaces.

\smallskip
Let $X$ be an $n$-dimensional complex projective variety (we implicitly assume all varieties in this paper to be irreducible, unless stated otherwise), $L$ a line bundle on $X$ and $W\subseteq H^0(X,L)$ a vector subspace such that the rational map
$$
 f=|W|:X\dashrightarrow \PP W^{\vee}
$$
is generically finite and dominant. Let $B$ be the scheme-theoretic base locus of $f$. Our focus will be the discrepancy of $f$:
\begin{df}[Discrepancy]
In the above setup, the discrepancy $\delta$ of $f$ is
$$
  \delta:=\deg_XL-\deg f.
$$
\end{df}
If $f$ is regular, the discrepancy $\delta$ is equal to zero. In general, naively, $\delta$ tells us how much of the degree of $L$ is absorbed by the base locus $B$. Note that the discrepancy is not always positive: the presence of the base locus could even increase the degree, as shown in Example~\ref{example:negative}. We will show that when $L$ is ample and the base locus is not empty, the discrepancy is strictly positive.

\smallskip
The following formula for the discrepancy in terms of Segre classes is given in~\cite[Prop.~4.4]{fultonbook}:
$$
 \delta=\int_B(1+c_1(L))^n\cap s(B,X).
$$
For our purposes in this paper  we prefer to avoid Segre classes and use the language of Vogel's cycles, called $v$-cycles for short. The theory of $v$-cycles is fully developed and described in detail in~\cite{flennerbook}, other references are~\cite{vogel} and~\cite{vangastel}. However, as the applications we need in this paper are limited, for our purposes a limited version of the theory suffices. We thus prefer to give an entirely self-contained exposition of the machinery we use --- avoiding most technicalities, and not having to rely on the literature on the subject.

\medskip
In the setup above, we will define effective cycles $V^j$ with support contained in $B$ and the so-called residual  cycles $R^j$ such that the support of none of their irreducible components is contained in $B$, with $j=0,\dots, n$. Both $V^j$ and $R^j$ are equidimensional of dimension $n-j$, unless they are empty.

These cycles are defined inductively --- according to Vogel's intersection algorithm. We let $V^0:=\emptyset$, and let $R^0:=X$. For any $j>0$ assume we have already defined effective cycles $V^{j-1}$ and $R^{j-1}$ of pure dimension $n-j+1$. Since none of the irreducible components of $R^{j-1}$ have support contained in $B$, the zero locus $D_j\subset X$ of a generic section $s_j\in W$ does not contain any irreducible component of $R^{j-1}$. We thus write $D_j\cap R^{j-1}=\sum_k a_k E_k$, where the $E_k$ are reduced and irreducible Cartier divisors on $R^{j-1}$. We order the divisors $E_k$ in such a way that $E_k$ is supported within $B$ for $k=1,\dots , t$ and is not supported within $B$ for $k>t$. Then we let
$$
  V^j:=\sum_{k=1}^t a_kE_k \quad \textrm{and} \quad R^j:=\sum_{k>t}a_kE_k.
$$
The cycles $R^j$ and $V^j$ are effective of pure dimension $n-j$. The cycle $V^j$ can possibly be empty; on the other hand, for $D_j$ general, the cycle $R^j$ is non-empty.
Since
$$
  D_j\cap R^{j-1}=V^j+R^j\,,
$$
and since $D_j$ is the zero locus of a section of $L$, it follows that
\begin{equation}\label{Rii-1}
  \deg_{R^j} L=\deg_{R^{j-1}}L-\deg_{V^j} L.
\end{equation}
\begin{thm}[Formula for the discrepancy]\label{main}
In the setup above, the discrepancy is given by
$$
\delta=\sum_{j=1}^n \deg_{V^j} L\,.
$$
\end{thm}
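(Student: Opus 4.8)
The plan is to telescope the recursion~\eqref{Rii-1} and then to identify the final residual term with the degree of $f$. Summing~\eqref{Rii-1} over $j=1,\dots,n$, the left-hand sides telescope and give
$$
 \deg_{R^0}L-\deg_{R^n}L=\sum_{j=1}^n\deg_{V^j}L.
$$
Since $R^0=X$, the first term is $\deg_XL$, so the theorem reduces to the single equality $\deg_{R^n}L=\deg f$: granting it, the display reads $\deg_XL-\deg f=\sum_j\deg_{V^j}L$, which is precisely the definition of $\delta$. This first step is routine bookkeeping; all the content lies in computing $\deg_{R^n}L$.

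The heart of the proof is thus to show $\deg_{R^n}L=\deg f$. As $R^n$ is a $0$-cycle, $\deg_{R^n}L$ is simply its length. I would first record that, because $f$ is generically finite and dominant onto $\PP W^\vee$, one has $\dim W=n+1$ and $\PP W^\vee\cong\PP^n$; hence the $n$ general sections $s_1,\dots,s_n$ used in the construction have a single common zero $y=\bigcap_j H_{s_j}\in\PP W^\vee$, which is a general point of $\PP^n$ as the $s_j$ vary. Writing $U:=X\setminus B$ for the locus where $f$ is a morphism, the identities $f^{-1}(H_{s_j})\cap U=D_j\cap U$, together with the transverse meeting of $n$ general hyperplanes at $y$, give, scheme-theoretically,
$$
 f^{-1}(y)=(D_1\cap\cdots\cap D_n)\cap U.
$$

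The remaining point is to match $R^n$ with this fiber. The observation that makes this work is that no component of $D_j\cap R^{j-1}$ passing through a point off $B$ can be contained in $B$, so over $U$ the residual algorithm never discards anything: inductively $R^j\cap U=(D_1\cap\cdots\cap D_j)\cap U$, and in particular $R^n\cap U=f^{-1}(y)$. Conversely, every component of $R^n$ lies off $B$ by construction, so $R^n$ is supported in $U$, whence $R^n=f^{-1}(y)$ as $0$-dimensional schemes. Finally, working over $\CC$, generic smoothness for the morphism $f|_U$ shows that the fiber over a general $y$ is reduced with exactly $\deg f$ points; equivalently, the general $D_j$ meet transversally away from $B$, so every multiplicity in the $0$-cycle $R^n$ equals $1$. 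Therefore $\deg_{R^n}L=\#f^{-1}(y)=\deg f$, as needed.

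I expect the main obstacle to be this last identification: verifying that the inductively defined residual cycles coincide with the naive complete intersection once restricted to $U$, and that genericity of the $s_j$ simultaneously forces reducedness of the general fiber and the vanishing of all multiplicities in $R^n$. Both the inclusion $\operatorname{supp}(R^n)\subseteq(D_1\cap\cdots\cap D_n)\setminus B$ and its reverse reduce to the elementary remark above about components through points off $B$; beyond this, the only genuine input is the standard Bertini and generic-smoothness package in characteristic zero.
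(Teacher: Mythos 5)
Your proposal is correct and takes essentially the same route as the paper: telescope~\eqref{Rii-1} to reduce everything to the single identity $\deg_{R^n}L=\deg f$, and prove that identity by recognizing $R^n$ as the fiber of $f$ over the general point $\PP H_n=\bigcap_j H_{s_j}$. The paper packages this identification as the inductive statement that each restriction $f_j=f|_{R^j}\colon R^j\dashrightarrow \PP H_j$ has the same degree as $f$, whereas you verify it directly at $j=n$, making explicit (via generic smoothness and transversality in characteristic zero) the reducedness and multiplicity-one assertions that the paper leaves implicit.
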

\begin{proof}
For $0\leq j\leq n$, consider the restricted morphisms
$$
  f_j:=f|_{R^j}\colon R^j \dashrightarrow \PP H_j\,,
$$
where $H_j$ is the linear subspace of $W^{\vee}$ that is the common zero locus of the sections $s_1,\ldots,s_j\in W$. Since $R^j$ is the closure of $f^{-1}(\PP H_j)$ in $X$, we have
$$
  \deg f=\deg f_j
$$
for any $j$. We now take $j=n$. Since $R^n$ is zero-dimensional and it is supported outside $B$, the map
$$
  f_n\colon R^n \to \PP H_n
$$
is a regular morphism, and thus
$$
  \deg f=\deg f_n=\deg_{R^n}L.
$$
To compute $\deg_{R^n}L$, we apply formula~\eqref{Rii-1} $n$ times to obtain
$$
\deg_{R^n}L=\deg_{R^0}L-\sum_{j=1}^n\deg_{V^j}L\,.
$$
The theorem follows from observing that $\deg_{R^0}L=\deg_XL$.
\end{proof}
\begin{example}\label{example:1}
Let $\ell$ be a line in $\PP^3$ and $W\subset H^0(\PP^3,\calO(3))$ define a generic $3$-dimensional linear system of cubics containing $\ell$. This linear system gives a rational map with finite fibres
$$
 f=|W|:\PP^3\dashrightarrow \PP W^{\vee}\,.
$$
Let us run Vogel's algorithm to describe the $v$-cycles and compute the discrepancy. We thus pick $4$ generic cubics $D_1,\dots , D_4\in W$, and have $R^1=D_1$ and $V^1=\varnothing$. The intersection $D_2\cap R^1$ is a reducible curve $ C\cup \ell$; thus $R^2=C$ and $V^2=\ell$. The intersection $C\cap D_3$, is equal to the union of  $C\cap \ell$ and $r$ reduced points not contained in $\ell$, and thus $V^3=C\cap \ell$ and $R^3$ consists of the other $r$ reduced points. To describe $C\cap \ell$, notice that the arithmetic genus of $C\cup \ell$ is $10$. The curve $C$ is a component of a complete intersection; since we know the degree of all components of this complete intersection, we can show that the genus of $C$ is $7$ using a standard formula, see for example~\cite[Proposition 11.6]{Sernesi}. We conclude that $C\cap \ell$ consists of $4$ reduced points.

The discrepancy of the morphism associated to $W$ is
$$
\delta=\deg_{\calO(3)}\ell+\deg_{\calO(3)}V^3=3+4=7\,.
$$
Hence the rational map given by $W$ has degree $\deg_{\PP^3}\calO(3)-\delta=27-7=20$.

We can also compute this discrepancy using Segre classes. Notice that $\ell$ is regularly embedded in $\mathbb{P}^3$, so its Segre class is just the inverse of the Chern class of the normal bundle. Let $p$ be the class of a point in $\ell$. We have
$$
\delta=\int_{\ell}(1+3p)^3(1-2p)=\int_{\ell}(1+9p-2p)=9-2=7\,.
$$
It is worth remarking that the Segre classes formula expresses the discrepancy as a difference, whereas the $v$-cycles formula expresses the discrepancy as a sum of positive contributions.
\end{example}
We now modify slightly the previous example to obtain a map with negative discrepancy (which is of course impossible for an ample line bundle $L$). The highlight of the following example is that the discrepancy is negative because the degree of $L$ on the $v$-cycle $V^2$ is negative; in particular, $L$ is not nef.

\begin{example}[Negative discrepancy]\label{example:negative}
Keeping the notation of Example~\ref{example:1}, we consider the blow-up $\pi\colon X\to \PP^3$ of $\PP^3$ at generic points $p_1, \dots , p_k$ on the line $\ell$; in particular, we assume that these points are not contained in the curve $C$ described in Example~\ref{example:1}. Let $E_1,\ldots,E_k$ be the exceptional divisors and let $E=\sum E_i$. On $X$, we take as line bundle $L=\pi^*\calO(3)-E$, and as linear system we take the proper transform $\widetilde{W}$ of $W$; denote by $\tilde{\ell}$ the proper transform of $\ell$.

The $v$-cycles are the proper transform of the $v$-cycles of the previous example, so $V^2=\tilde{\ell}$, and $V^3$ consists again of $4$ reduced points on $\tilde{\ell}$. The discrepancy is now
$$
\delta=\deg_L\tilde{\ell}+\deg_LV^3=(3-k)+4=7-k,
$$
which is negative for $k>7$. The degree of the map is $$\deg_XL-\delta=(27-k)-(7-k)=20\,.$$ As expected, this degree agrees with the degree of the map described in Example~\ref{example:1}; the reason is that the two morphisms coincide outside the exceptional locus of the blow-up.
\end{example}
Before giving an application of Theorem~\ref{main}, we need to recall the notion of  multiplicity. Let $Z$ be an irreducible subscheme of an irreducible scheme $X$; the  \emph{multiplicity $\mult_ZX$ of $X$ along $Z$} is defined using the notion of Samuel multiplicity as follows. Let $(R,\m)$ be the local ring of $X$ at (the generic point of) $Z_{\red}$. In this ring, $Z$ is defined by an $\m$-primary ideal $\q$. For $t$ sufficiently large, the Hilbert function $h(t):=\len(R/\q^t)$ is a polynomial in $t$. We normalise this polynomial by multiplying it by $\deg(h(t))!=\codim(Z,X)!\,$, and the \emph{Samuel multiplicity} $\ee(\q,R)$ is then defined to be the leading term of the normalised Hilbert polynomial.

Then one defines
\[
  \mult_Z X := \ee(\q,R)\,.
\]
The following commutative algebra result, roughly speaking, reduces the computation of the Samuel multiplicity to the case of local complete intersections.
\begin{prop}\label{CAtheorem}
Let $(R,\m)$ be a $d$-dimensional Cohen-Macaulay local ring with infinite residue field $K$. Let $\q$ be an $\m$-primary ideal of $R$. Let $t_1,\dots , t_m$ be a set of generators of $\q$, and let $s_1,\dots, s_d$ be generic linear combinations of the $t_i$. Then the ideal $I:=(s_1,\dots, s_d)\subseteq \q$ computes the Samuel multiplicity of $\q$, that is
$$
\ee(\q,R)=\ee(I,R) = \ell(R/I).
$$
\end{prop}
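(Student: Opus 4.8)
The plan is to establish the two equalities $\ee(\q,R)=\ell(R/I)$ and $\ee(I,R)=\ell(R/I)$ separately, each exploiting the Cohen--Macaulay hypothesis, and then combine them; here $\ell(-)$ denotes length and $d=\dim R$.

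First I would observe that generic linear combinations $s_1,\dots,s_d$ of the $t_i$ form a system of parameters. Since $\q$ is $\m$-primary, the ideal generated by $t_1,\dots,t_m$ has radical $\m$, and because the residue field $K$ is infinite one may apply prime avoidance one element at a time: a generic $s_i$ avoids every minimal prime of $R/(s_1,\dots,s_{i-1})$, so after $d$ steps $R/I$ is Artinian and $I$ is $\m$-primary. As $R$ is Cohen--Macaulay, a system of parameters is a regular sequence, so $s_1,\dots,s_d$ is a regular sequence. This yields the second equality directly: for an ideal generated by a regular sequence the associated graded ring satisfies $\operatorname{gr}_I R=\bigoplus_{n\ge0}I^n/I^{n+1}\cong (R/I)[X_1,\dots,X_d]$, with $X_i$ the class of $s_i$. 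Hence $\ell(I^n/I^{n+1})=\ell(R/I)\binom{n+d-1}{d-1}$, and summing gives $\ell(R/I^{n+1})=\ell(R/I)\binom{n+d}{d}$, a polynomial of degree $d$ in $n$ with leading term $\ell(R/I)\,n^d/d!$. By the normalisation in the definition of the Samuel multiplicity this reads $\ee(I,R)=\ell(R/I)$.

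For the first equality $\ee(\q,R)=\ell(R/I)$ I would argue by induction on $d$, using the theory of superficial elements (which again requires $K$ infinite). If $d=0$ then $R$ is Artinian, $\q^n=0$ for $n\gg0$, so $\ee(\q,R)=\ell(R)$, while $I=(0)$ and $\ell(R/I)=\ell(R)$. For $d\ge1$ a generic $s_1$ is superficial for $\q$; since $R$ is Cohen--Macaulay and $s_1$ is a parameter it is a non-zero-divisor, so $R/s_1R$ is Cohen--Macaulay of dimension $d-1$, and the standard multiplicity reduction formula gives $\ee(\q,R)=\ee(\overline{\q},R/s_1R)$, where $\overline{\q}$ is the image of $\q$. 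The images $\overline{s}_2,\dots,\overline{s}_d$ may be taken to be generic linear combinations of the generators $\overline{t}_i$ of $\overline{\q}$, so the inductive hypothesis applied to $R/s_1R$ gives $\ee(\overline{\q},R/s_1R)=\ell\big((R/s_1R)/(\overline{s}_2,\dots,\overline{s}_d)\big)=\ell(R/I)$. Combining, $\ee(\q,R)=\ell(R/I)=\ee(I,R)$, which is the proposition.

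Each genericity condition invoked is a nonempty Zariski-open condition on the coefficient tuples, so, $K$ being infinite, they can be arranged simultaneously, and genericity descends under the surjection $R\to R/s_1R$. The main obstacle is the input underlying the first equality: the existence of superficial elements and the reduction formula $\ee(\q,R)=\ee(\overline{\q},R/s_1R)$ for a superficial non-zero-divisor---equivalently, the Northcott--Rees fact that $I$ is a minimal reduction of $\q$ (the analytic spread of an $\m$-primary ideal in a $d$-dimensional local ring is $d$) and that a reduction has the same Samuel multiplicity. This is the one place where the full force of the commutative-algebra machinery enters; the associated-graded computation and the passage from a system of parameters to a regular sequence are then routine.
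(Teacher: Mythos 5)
Your proof is correct, but it takes a genuinely different route from the paper's, which is much shorter because it leans on cited machinery. For the equality $\ee(\q,R)=\ee(I,R)$ the paper simply cites Matsumura (Thms.~14.13 and 14.14): generic linear combinations of generators of $\q$ generate a reduction of $\q$, and a reduction has the same Samuel multiplicity --- exactly the Northcott--Rees content you identify at the end of your proposal. Your dimension induction via superficial elements and the formula $\ee(\q,R)=\ee(\overline{\q},R/s_1R)$ amounts to reproving that cited theorem in this special setting, at the cost of the (standard but nontrivial) bookkeeping that all the genericity conditions can be imposed simultaneously and persist modulo $s_1$; since you still black-box the existence of superficial elements and the reduction formula, the two approaches end up relying on comparable commutative-algebra input here. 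For the equality $\ee(I,R)=\ell(R/I)$ the arguments genuinely diverge: the paper uses Serre's Euler-characteristic formula $\ee(I,R)=\sum_i(-1)^i\ell\bigl(H_i(\underline{s},R)\bigr)$ for the Koszul complex, with the Cohen--Macaulay hypothesis killing the higher Koszul homology, whereas you use the isomorphism $\operatorname{gr}_IR\cong(R/I)[X_1,\dots,X_d]$ for an ideal generated by a regular sequence and compute the Hilbert polynomial directly. Your version is more elementary (no homological algebra) and makes the mechanism visible; the paper's is shorter given the references. Both proofs use the Cohen--Macaulay hypothesis in the same essential way, namely to convert the system of parameters $s_1,\dots,s_d$ into a regular sequence.
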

\begin{proof}  From~\cite[Thms. 14.13 and 14.14 p. 112]{matsumura} it follows that $\ee(\q,R)=\ee(I,R)$. Since $R$ is Cohen-Macaulay, $\underline s:=s_1,\dots, s_d$ is a regular $R$-sequence; thus
\[
\ell(R/I) = \ell(H_0(\underline s, R)) = \sum_i(-1)^i \ell(H_i(\underline s, R)) = \ee(I,R)
\]
by ~\cite[p. 109]{matsumura}.
\end{proof}
Before applying this result, let us recall that by definition the class of a closed irreducible subscheme $Z\subset X$ in the Chow group of $X$ is $$ [Z]=\ell(\calO_{X,Z_{\red}}/\mathcal{I}_Z)\cdot[Z_{\red}]\,,$$
as explained for example in~\cite[Section 21.1.1]{Voisin}.
\begin{cor}[Bound on the discrepancy]\label{bound_disc}
In the setup above, assume that $X$ is Cohen-Macaulay, let $B=\cup_{i=1}^r B_i$ be the decomposition of the scheme-theoretic base locus of $f$ into its scheme-theoretic irreducible components, and let $\ee_i:=\mult_{B_i}X$. If $L$ is nef, then
$$
  \delta \geq \sum_i \ee_i \deg_{B_{i,\red}}L\,.
$$
In particular, if $B$ is non-empty and $L$ is ample, then the discrepancy $\delta$ is strictly positive.
\end{cor}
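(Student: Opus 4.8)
The plan is to feed the formula $\delta=\sum_{j=1}^n\deg_{V^j}L$ of Theorem~\ref{main} into Proposition~\ref{CAtheorem}, reading off from each Vogel cycle the contribution of the corresponding component of the base locus. Since $L$ is nef and every $V^j$ is an \emph{effective} cycle, each summand $\deg_{V^j}L$ is non-negative; so it is enough to locate, inside the cycles $V^j$, effective sub-cycles whose $L$-degrees already add up to $\sum_i\ee_i\deg_{B_{i,\red}}L$.

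First I would fix a component, put $c_i:=\codim(B_i,X)=n-\dim B_{i,\red}$, and claim that $B_{i,\red}$ occurs in $V^{c_i}$ with coefficient $\ee_i$. The dimension bookkeeping pins down the index: each $V^j$ is pure of dimension $n-j$, so the $(n-c_i)$-dimensional variety $B_{i,\red}$ can be a component of $V^j$ only when $j=c_i$. To compute the coefficient I would localise at the generic point $\eta$ of $B_{i,\red}$. Writing $R:=\calO_{X,\eta}$, this is a Cohen–Macaulay local ring of dimension $c_i$ with infinite residue field $k(B_{i,\red})$, and the ideal $\q:=(\calI_B)_\eta$ is generated by the local expressions $t_1,\dots,t_m$ of a basis of $W$; because $B_i$ is the only component of $B$ through $\eta$, the ideal $\q$ is $\m$-primary and $\ee_i=\mult_{B_i}X=\ee(\q,R)$ by definition.

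Next I would trace Vogel's algorithm inside $R$. The generic sections $D_1,\dots,D_{c_i}$ restrict to generic linear combinations $s_1,\dots,s_{c_i}$ of the $t_k$, which by the Cohen–Macaulay hypothesis form a regular sequence. For $j<c_i$ the locus $V(s_1,\dots,s_j)$ has dimension $n-j>\dim B_i$, hence is not contained in $B$ near $\eta$; consequently $V^j$ vanishes near $\eta$ for $j<c_i$ and $R^{c_i-1}$ coincides there with $V(s_1,\dots,s_{c_i-1})$, so nothing is removed before the $c_i$-th step. At that step the localisation $R/I$ with $I=(s_1,\dots,s_{c_i})$ is Artinian, so near $\eta$ the scheme $V(s_1,\dots,s_{c_i})$ is supported on $B_{i,\red}$ alone and is swept entirely into $V^{c_i}$; by the cycle-class formula $[Z]=\len(\calO_{X,Z_{\red}}/\calI_Z)\,[Z_{\red}]$ its coefficient along $B_{i,\red}$ is the proper-intersection multiplicity $\len(R/I)$. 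Proposition~\ref{CAtheorem} then identifies $\len(R/I)=\ee(\q,R)=\ee_i$, proving the claim.

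Finally I would assemble the bound. Components of distinct dimension land in distinct cycles $V^j$, while components of equal codimension appear as distinct components of a single $V^j$; in either case no over-counting occurs, and writing $V^{c_i}=\ee_i[B_{i,\red}]+(\text{effective})$ and using that $L$ is nef on every effective cycle yields $\delta=\sum_{j=1}^n\deg_{V^j}L\ge\sum_i\ee_i\deg_{B_{i,\red}}L$. For the last assertion, ampleness of $L$ gives $\deg_{B_{i,\red}}L\ge1$, and the Samuel multiplicity of an $\m$-primary ideal satisfies $\ee_i\ge1$, so a non-empty $B$ forces $\delta>0$. The step I expect to be the main obstacle is the identification in the third paragraph: one must verify that sections $D_j$ chosen generic over the ground field are still general enough to realise a reduction computing $\ee(\q,R)$ over the larger residue field $k(B_{i,\red})$ — that is, that the ``bad'' locus of coefficient vectors, a priori defined over $k(B_{i,\red})$, is in fact cut out by conditions defined over the ground field, so that a field-generic choice avoids it and the Vogel coefficient genuinely equals the Samuel multiplicity.
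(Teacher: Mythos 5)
Your proof is correct and follows essentially the same route as the paper: localising at the generic point of each $B_{i,\red}$, showing that Vogel's algorithm removes nothing there before step $c_i=\codim(B_i,X)$ so that $B_{i,\red}$ supports a component of $V^{c_i}$ whose coefficient is $\ell(R/(s_1,\dots,s_{c_i}))$, identifying this length with $\ee_i$ via Proposition~\ref{CAtheorem}, and then using nefness of $L$ in Theorem~\ref{main} to discard all remaining effective contributions (with ampleness and $\ee_i\ge 1$ giving strict positivity). The genericity-over-$k(B_{i,\red})$ subtlety you flag at the end is genuine but standard, and the paper's own proof passes over it in silence with exactly the same appeal to ``generic elements of $W$''.
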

\begin{proof}
Fix an irreducible component $B_i$ of $B$ and denote by $k:=\dim X-\dim B_i$. We will single out an irreducible component $Z_i$ of $V^k$ such that $Z_{i,\red}=B_{i,\red}$ and
$$
\ee_i=\mult_{B_i}X=\mult_{Z_i}X=\ell(\calO_{X,Z_{i,\red}}/\mathcal{I}_{Z_i})\,.
$$
Since $B_i$ is contained in each divisor $D_j$, it follows that $B_i$ is contained in $R^{k-1}$, and, for dimensional reasons, $B_{i,\red}$ is the support of an irreducible component $Z_i$ of $R^{k-1}\cap D_k$. By construction of the $v$-cycles, this means that $Z_i$ is an irreducible component of $V^k$. In particular, we have  $Z_{i,\red}=B_{i,\red}$. Let $(R,\m)$ be the local ring of $X$ at the generic point of $Z_{i,\red}$. The ideal $\q$ defining $B_i$ in the local ring $R$ is generated by the image in $R$ of the linear system $W$. Recall that $s_j$ is the section defining $D_j$. The subscheme $Z_i$ is defined in $R$ by the image of the sections $s_1, \dots, s_k$. These sections vanish on $B_i$, so they lie in $\q$. The sections are generic elements of $W$, so we can apply Proposition~\ref{CAtheorem} to conclude that $\ell(\calO_{X,Z_{i,\red}}/\mathcal{I}_{Z_i})=\mult_{Z_i}X=\mult_{B_i}X$.

We now use the nefness of $L$, which implies that its degree is non-negative on any subscheme $Y\subset X$.  Thus Theorem~\ref{main} implies
$$
\delta=\sum_{j=1}^n\deg_{V^j}L \geq \sum_{i=1}^r \deg_{Z_i}L.
$$
where $Z_1,\dots , Z_r$ is the list of irreducible components of $v$-cycles associated to the irreducible components $B_1,\dots , B_r$ of the base locus by the procedure described above.
(Note that if $B$ is zero-dimensional we do not need to get rid of higher codimension $v$-cycles, so the inequality becomes an equality.)

The degree $\deg_{Z_i}L$ is an intersection number which can be computed in the Chow group of $X$, so $\deg_{Z_i}L=\ee_i\deg_{B_{i,red}}L$.
\end{proof}
Let us compute the cycles $Z_i$ of the previous proof in a particular example.
\begin{example}
Let $X$ be the singular quadric surface defined by $x^2+y^2+w^2=0$ in $\PP^3$, with coordinates $[x:y:w:z]$; let $p=[0:0:0:1]$ be the singular point. Let $L$ be $\calO_{\PP^3}(1)|_X$, and consider the linear system $W$ on $X$ generated by the restrictions of $x,y$ and $w$. The base locus is the point $p$ endowed with the reduced scheme structure. On the other hand, if we run Vogel's algorithm using as sections $s_1=x$, $s_2=y$ and $s_3=w$, the $v$-cycle $V^2$ is defined in $\PP^3$ by the ideal $(x^2+y^2+w^2, x,y)$; thus $V^2$ is supported at $p$ but is not reduced. This is a case where $Z$ and $B$ have the same support and the same multiplicity in $X$, but they are different as schemes.
\end{example}
\begin{rem}[Zero-dimensional base locus]\label{dim_zero}
Suppose $B=\cup_i B_i$, where each $B_i$ is a possibly non-reduced scheme supported on a closed point $p_i$. In this case, as explained in the proof of Corollary~\ref{bound_disc},
$$
\delta=\deg_{V^n}L=\sum \mult_{B_i}X\,.
$$
This formula has a down to earth proof. Take $n$ general effective Cartier divisors $D_i$ in $|W|$: they intersect in a point $q\in\PP W^{\vee}$; the intersection of the $D_i$ in $X$ consists of the points $p_i$ counted with multiplicity $\mult_{B_i}X$ and other $\deg_XL-\sum \mult_{B_i}X$ distinct reduced points $q_i$. These points $q_i$ are exactly the fiber of $f$ over $q$, proving the formula for $\delta$ in this case.
\end{rem}
The case we are interested in is when  $X=\Theta$ is the theta divisor of a ppav $(A,\Theta)$ of dimension $g$, and $f=\calG$ is the Gauss map. We thus obtain
$$
  \deg\calG=g!-\delta.
$$

\section{The multiplicity bound for theta divisors}\label{section:multi}
In this section we prove Theorem~\ref{thm:multbound} bounding the multiplicity of the theta divisor, by an analysis of the discrepancy associated to the Gauss map. It is well-known~\cite[Section 4.4]{BL} that for an indecomposable ppav the Gauss map is dominant and has positive degree. We start with the following observation.
\begin{lm}
For an indecomposable ppav the degree of the Gauss map $\deg \calG$ is even. If $g\geq 3$, then
$$
4\leq \deg \calG \leq g!\,.
$$
Moreover, $\deg \calG = g!$ if and only if the theta divisor is smooth.
\end{lm}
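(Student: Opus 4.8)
The plan is to treat the four assertions separately: evenness, the bound $\deg\calG\le g!$, the equality case, and the lower bound $\deg\calG\ge4$. The first three are quick consequences of the discrepancy machinery of Section~2, while the last one is the real content and is where the hypothesis $g\ge3$ is used essentially.

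First I would record that $\deg_\Theta L=g!$: by adjunction $\omega_\Theta=\calO_A(\Theta)|_\Theta=L$ (as $\omega_A=\calO_A$), and $\int_\Theta c_1(L)^{g-1}=\Theta^{g-1}\cdot\Theta=\Theta^g=g!$ since the polarization is principal. Combined with the identity $\deg\calG=g!-\delta$ derived at the end of Section~2, both the upper bound and the equality case follow from Corollary~\ref{bound_disc}. Indeed $\Theta$ is a hypersurface in the smooth variety $A$, hence Cohen-Macaulay, and $L$ is ample; thus $\delta\ge\sum_i\ee_i\deg_{B_{i,\red}}L\ge0$, giving $\deg\calG\le g!$. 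Since the scheme-theoretic base locus of $\calG$ is $\Sing(\Theta)$, it is empty precisely when $\Theta$ is smooth, in which case $\calG$ is a morphism, every $V^j$ is empty, so $\delta=0$ and $\deg\calG=g!$; conversely, if $\Theta$ is singular the base locus is non-empty and the ``in particular'' clause of Corollary~\ref{bound_disc} forces $\delta>0$, whence $\deg\calG<g!$.

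For evenness I would use the involution $\iota=(-1)_A$, which preserves the symmetric divisor $\Theta$ and satisfies $\calG\circ\iota=\calG$: the theta function is even, so its partials are odd and $\calG(-x)=[-\partial_1\theta(x):\cdots:-\partial_g\theta(x)]=\calG(x)$. The fixed locus of $\iota$ on $\Theta$ lies in the finite set of $2$-torsion points, so $\iota$ acts freely on a general fibre $\calG^{-1}(p)$; that fibre is $\iota$-invariant and therefore decomposes into free orbits of size $2$, so $\deg\calG$ is even, and in particular $\deg\calG\ge2$.

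The main obstacle is the lower bound $\deg\calG\ge4$, i.e.\ ruling out $\deg\calG=2$ (for $g=2$ the Gauss map of the genus-two curve $\Theta$ really does have degree $2$, so the argument must use $g\ge3$). Since $\calG$ factors through the Kummer quotient $\bar\Theta:=\Theta/\iota$, a degree-two Gauss map would make the induced map $\bar\Theta\dashrightarrow\PP^{g-1}$ birational, forcing $\bar\Theta$ to be rational. I would exclude this by a Kodaira-dimension computation. By Ein--Lazarsfeld~\cite{EinLaz} the irreducible divisor $\Theta$ is normal with rational (hence canonical) singularities, and $\omega_\Theta=L$ is ample, so $\Theta$ is of general type with $\kappa(\Theta)=g-1$. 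The crucial use of $g\ge3$ is that the fixed locus of $\iota$ then has codimension $g-1\ge2$ in $\Theta$, so $\Theta\to\bar\Theta$ is \'etale in codimension one; the even (i.e.\ $\iota$-invariant) pluricanonical sections of $\omega_\Theta^{\otimes m}$, whose number still grows like $m^{g-1}$, therefore descend to reflexive pluricanonical sections on $\bar\Theta$, giving $\kappa(\bar\Theta)=g-1\ge2>0$ and contradicting rationality. Hence $\deg\calG\neq2$, so $\deg\calG\ge4$. The delicate step I would write out carefully is exactly this descent across the codimension-two branch locus, together with checking that $\bar\Theta$ inherits canonical (or at least klt) singularities so that its Kodaira dimension is genuinely computed on a smooth model; everything else is routine.
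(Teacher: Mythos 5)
Three of your four claims are handled exactly as in the paper: evenness via the $\iota$-invariance of $\calG$, the upper bound via Corollary~\ref{bound_disc} together with ampleness of $L$, and the equality case via the fact that the base locus of $\calG$ is precisely $\Sing(\Theta)$. Your reduction of the lower bound to the non-rationality of $\bar\Theta=\Theta/\iota$ is also the paper's first step. The divergence is in how rationality of $\bar\Theta$ is excluded, and there your argument has a genuine gap. The paper does it with holomorphic $2$-forms: every $2$-form on $A$ is $(-1)$-invariant, some such form restricts non-trivially to $\Theta$ (because the Gauss map is dominant and $g\ge 3$), and an invariant $2$-form induces a non-zero holomorphic $2$-form on any smooth projective model of $\bar\Theta$ (for instance via the trace under a resolution of $\calG$), which is impossible for a rational variety. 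This needs no information whatsoever about the singularities of $\Theta$ or of the quotient.

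Your route instead needs the Kodaira dimension of a \emph{smooth model} of $\bar\Theta$ to be positive, and the parenthetical ``or at least klt'' is exactly where it breaks. For a klt variety $Y$, sections of $mK_Y$ pull back to a resolution with genuine poles along exceptional divisors of negative discrepancy, so $\kappa(Y,K_Y)=\dim Y$ does \emph{not} imply that a smooth model is of general type; this failure is not hypothetical, since there exist rational normal projective surfaces with quotient (hence klt) singularities and ample canonical divisor (Hwang--Keum's rational $\QQ$-homology projective planes), which are simultaneously klt, ``of general type'' in your reflexive sense, and rational. So you must prove $\bar\Theta$ is canonical. But quasi-\'etale quotients of canonical singularities are in general only klt: for a divisorial valuation over $\bar\Theta$ fixed by $\iota$ with ramification index $2$, the discrepancies satisfy $a_{E'}(\Theta)+1=2\left(a_E(\bar\Theta)+1\right)$, so canonicity of $\bar\Theta$ requires $a_{E'}(\Theta)\ge 1$. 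Over fixed points at which $\Theta$ is smooth this is automatic (a divisor centered at a smooth point of the $(g-1)$-fold $\Theta$ has discrepancy at least $g-2\ge 1$), but the $2$-torsion points may lie in $\Sing(\Theta)$ --- vanishing theta-nulls do occur --- and there \cite{EinLaz} gives only $a_{E'}(\Theta)\ge 0$; an $\iota$-fixed crepant divisor over such a point would make $\bar\Theta$ strictly klt and your Kodaira-dimension contradiction evaporates. You offer no argument to exclude this, and it is not routine.

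Two lesser points: the claim that the invariant plurigenera still grow like $m^{g-1}$ is asserted rather than proved (it is true, but needs an argument); and invoking the full strength of \cite{EinLaz} is awkward in context, since the paper's Corollary~\ref{smooth_cod_1} --- a special case of Ein--Lazarsfeld --- is meant to be a consequence of the theorem that this very lemma feeds into. The clean repair is to replace pluricanonical sections by the invariant $2$-forms themselves: these descend to reflexive $2$-forms on $\bar\Theta$ and extend to a resolution already under klt hypotheses (by the extension theorem for reflexive forms on klt spaces), or one can bypass singularities entirely via the trace argument --- at which point your proof becomes the paper's.
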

\begin{proof}
Let $\iota:z\mapsto -z$  be the involution of the ppav; since the Gauss map is invariant under $\iota$, its degree is even.

If the degree of the Gauss map for a ppav of dimension $g\ge 3$ were equal to $2$, then $\Theta/\iota$ would be rational. However, the quotient $\Theta/\iota$ cannot be rational because there exist non-zero holomorphic $\iota$-invariant $2$-forms on $\Theta$.

The inequality $\deg\calG \leq g!$ follows from Corollary~\ref{bound_disc} and the fact that the Gauss map is given by an ample line bundle. The last claim holds because the base locus of the Gauss map is exactly $\Sing(\Theta)$.
\end{proof}
Since the base locus of the Gauss map is equal to $\Sing(\Theta)$, the results of the previous section combined with this lemma yield
$$
\delta=g!-\deg\calG\leq g!-4\,,
$$
where $\delta$ is the discrepancy associated to $\calG$. To translate this into geometric information about the singularities of the theta divisor, we need another commutative algebra statement.
\begin{prop}\label{bound_Samuel}
Let $D\subset X$ be an irreducible divisor in an $n$-dimensional smooth variety $X$, and let $Z$ be a $d$-dimensional irreducible component of $\Sing(D)$. Let $m:=\mult_{Z_{red}} D$   and $\ee:=\mult_ZD$. Then
$$ \ee \geq m(m-1)^{n-d-1}.$$
\end{prop}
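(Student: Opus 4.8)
The plan is to localize at the generic point of $Z_{\red}$ and turn the inequality into a statement about intersection multiplicities of hypersurfaces. Write $c:=n-d=\codim(Z,X)$ and let $\eta$ be the generic point of $Z_{\red}$. Then $R_0:=\calO_{X,\eta}$ is a regular local ring of dimension $c$, with maximal ideal $\n$ and infinite residue field $\kappa$ (the function field of $Z_{\red}$), and $R:=\calO_{D,\eta}=R_0/(f)$, where $f\in R_0$ is a local equation of $D$; since $X$ is smooth, $R$ is Cohen--Macaulay of dimension $c-1$. By definition $m=\operatorname{ord}_\n(f)$ is the $\n$-adic order of $f$, whereas $\ee=\mult_Z D=\ee(\q,R)$ is the Samuel multiplicity of the Jacobian ideal $\q=(\partial_1 f,\dots,\partial_n f)R\subseteq R$; because $Z$ is an irreducible component of $\Sing(D)$, the ideal $\q$ is $\m_R$-primary.

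First I would apply Proposition~\ref{CAtheorem} to the Cohen--Macaulay ring $R$ (which has infinite residue field) and the $\m_R$-primary ideal $\q$, obtaining
$$
 \ee=\ee(\q,R)=\ell\bigl(R/(\ell_1,\dots,\ell_{c-1})\bigr)=\ell\bigl(R_0/(f,\ell_1,\dots,\ell_{c-1})\bigr),
$$
where $\ell_1,\dots,\ell_{c-1}$ are generic $\kappa$-linear combinations of the generators $\partial_i f$ of $\q$. In particular $f,\ell_1,\dots,\ell_{c-1}$ is a system of parameters of the $c$-dimensional regular local ring $R_0$.

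The next step is to determine the $\n$-adic orders of these $c$ elements. Choose local coordinates $x_1,\dots,x_c,y_1,\dots,y_d$ on $X$ near the generic point of $Z_{\red}$ with $Z_{\red}=\{x_1=\dots=x_c=0\}$, so that $\n=(x_1,\dots,x_c)R_0$ and $\operatorname{gr}_\n R_0=\kappa[x_1,\dots,x_c]$. By definition $\operatorname{ord}_\n(f)=m$; let $F$ be its leading form, a nonzero homogeneous polynomial of degree $m$. Each transverse partial $\partial_{x_i}f$ lies in $\n^{m-1}$ with leading form $\partial_{x_i}F$, whereas $f\in\n^m$ forces $\partial_{y_j}f\in\n^m$, so the tangential derivatives have order $\ge m$ and do not contribute in degree $m-1$. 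Since $\operatorname{char}\kappa=0$, Euler's identity $\sum_i x_i\,\partial_{x_i}F=mF\ne 0$ shows that the forms $\partial_{x_i}F$ do not all vanish; hence a generic $\kappa$-linear combination of the $\partial_i f$ has nonzero leading form of degree exactly $m-1$, i.e.\ $\operatorname{ord}_\n(\ell_k)=m-1$ for every $k$.

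Finally I would invoke the classical lower bound for the length of a parameter ideal of a regular local ring in terms of the orders of its generators: if $g_1,\dots,g_c$ is a system of parameters of $R_0$, then $\ell(R_0/(g_1,\dots,g_c))\ge\prod_i\operatorname{ord}_\n(g_i)$. Geometrically this is the statement that the local intersection multiplicity of hypersurfaces meeting properly at a point is at least the product of their multiplicities there (this is classical; see~\cite{fultonbook}); algebraically it follows from the fact that the mixed multiplicity $\ee(\n^{a_1},\dots,\n^{a_c};R_0)=a_1\cdots a_c\,\ee(\n;R_0)=a_1\cdots a_c$ minimizes the length of a parameter ideal generated by elements with $g_i\in\n^{a_i}$. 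Applying this to $f,\ell_1,\dots,\ell_{c-1}$ gives
$$
 \ee=\ell\bigl(R_0/(f,\ell_1,\dots,\ell_{c-1})\bigr)\ge m\,(m-1)^{c-1}=m\,(m-1)^{n-d-1},
$$
as desired. The one genuinely non-formal input is this order--product inequality; supplying or citing it is the crux of the argument, while the order computation of the previous paragraph, which is where characteristic zero is used, is the only other step demanding care.
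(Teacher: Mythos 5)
Your proposal is correct and follows essentially the same route as the paper: localize at the generic point of $Z_{\red}$, use Proposition~\ref{CAtheorem} to identify $\ee$ with the colength of a parameter ideal generated by generic linear combinations of the partial derivatives, and then invoke an order--multiplicity inequality; the only difference is cosmetic, in that you apply the product-of-orders bound in the ambient regular local ring to the $n-d$ elements $f,\ell_1,\dots,\ell_{n-d-1}$ of orders $m,m-1,\dots,m-1$, whereas the paper applies the same result (\cite[Thm.~14.9, p.~109]{matsumura}) in the hypersurface ring $R=S/(f)$, Cohen--Macaulay of dimension $n-d-1$ with $\ee(\m,R)=m$, to parameters contained in $\m^{m-1}$. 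One remark: your Euler-identity/characteristic-zero digression is superfluous, since the lower bound only requires $\operatorname{ord}(\ell_k)\ge m-1$, which is immediate because differentiation takes $\n^{m}$ into $\n^{m-1}$ (this is exactly the paper's observation that $\q\subseteq\m^{m-1}$).
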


\begin{proof} Let $(S,\n)$ be the local ring of $Z_{red}\subset X$. It is a regular local ring of dimension $n-d$.    A local equation of $D$ is an element $f\in \n^m$.  Let   $(R,\m) = (S/(f),\n/(f))$ be the local ring of $Z_{red}\subset D$. It is a Cohen-Macaulay ring of dimension $n-d-1$. Since $m:=\mult_{Z_{red}} D$, we have
$$
  \ee(\m,R) = m\,.
$$
Let $\q:=(f_1,\dots, f_n)$ be the ideal of $Z\subset D$, where the $f_i$'s are the classes of the partial derivatives of $f$.  Note that $\q \subseteq \m^{m-1}$. Let  $s_1, \dots, s_{n-d-1}\in \q$ be a system of parameters.  Then
$$
  \ell(R/(s_1, \dots, s_{n-d-1})) \ge (m-1)^{n-d-1}\ee(\m,R) = (m-1)^{n-d-1}m
$$
by~\cite[Thm. 14.9, p. 109]{matsumura}.  Choosing $s_1, \dots, s_{n-d-1}$
to be a general linear combination of $f_1,\dots, f_n$ we can apply Proposition~\ref{CAtheorem}. Then we get
$$
  \ee(\q,R)= \ell(R/(s_1, \dots, s_{n-d-1}))\ge (m-1)^{n-d-1}m,
$$
proving the proposition.
\end{proof}
We can now prove our main result.
\begin{proof}[Proof of theorem~\ref{thm:multbound}]
The theorem follows by combining Corollary~\ref{bound_disc}, Proposition~\ref{bound_Samuel}, and the bound $\de\leq g!-4$.
\end{proof}

The estimates on the degree of the Gauss map and on the multiplicity of the singularities of Theorem~\ref{thm:multbound} can be improved using the vast literature on lower bounds of $\deg_{V_{i,\red}}\Theta$ for $d$-dimensional subvarieties $V\subseteq A$.  For any $p$ we will denote by $HA^{p,p}(A)$ the algebraic cohomology, that is the subgroup of $H^{p,p}(A,\CC)\cap H^{2p}(A,\ZZ)$ generated by the dual classes to the fundamental cycles of $(g-p)$-dimensional subvarieties of $A$ (so in particular $HA^{1,1}$ is the Neron-Severi group).

When $d=1$, a refinement of Matsusaka's criterion due to Ran states that if $V$ generates the abelian variety, then $\deg_V\Theta\ge g$, with equality if and only if $A$ is a Jacobian and $V$ is the Abel-Jacobi curve~\cite[Corollary 2.6 and Theorem 3]{Ran}. Combining this with  Theorem~\ref{thm:multbound} immediately yields the following:
\begin{cor}\label{app1}
For  a one-dimensional irreducible component $V$ of  $\Sing(\Theta)$, denote by $m:=\mult_{V_{red}}\Theta$. If $A$ is a simple abelian variety that is not a Jacobian, then
$$
 m(m-1)^{g-2}(g+1)\leq g!-\deg \calG     \le g! -4.
$$
\end{cor}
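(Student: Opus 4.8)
The plan is to isolate a single summand of the inequality in Theorem~\ref{thm:multbound} and then feed in Ran's lower bound on $\deg_V\Theta$. First I would note that every term of the sum
$\sum_{i\in I} m_i(m_i-1)^{g-d_i-1}\deg_{V_{i,\red}}\Theta$
is non-negative: each multiplicity satisfies $m_i\ge 2$ and each degree is a positive intersection number against an ample class. Hence all summands may be discarded except the one attached to the given one-dimensional component $V$. Setting $d=1$, so that $g-d-1=g-2$, this gives
$$
m(m-1)^{g-2}\deg_{V_{\red}}\Theta \;\le\; \sum_{i\in I} m_i(m_i-1)^{g-d_i-1}\deg_{V_{i,\red}}\Theta \;\le\; g!-\deg\calG \;\le\; g!-4.
$$

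The substantive step is to replace $\deg_{V_{\red}}\Theta$ by the explicit lower bound $g+1$. To invoke Ran's refinement of Matsusaka's criterion I must first verify its hypothesis, namely that $V$ generates $A$ as an abelian variety. This is exactly where simplicity enters: the abelian subvariety of $A$ generated by the curve $V_{\red}$ is non-trivial, since a curve is not contained in a point, so by simplicity it must equal $A$. Ran's theorem then yields $\deg_{V_{\red}}\Theta\ge g$, with equality if and only if $A$ is a Jacobian and $V_{\red}$ is its Abel--Jacobi curve. Because $A$ is assumed not to be a Jacobian, the inequality is strict; and since $\deg_{V_{\red}}\Theta$ is an integer, strictness upgrades to $\deg_{V_{\red}}\Theta\ge g+1$.

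Combining the two displays produces
$$
m(m-1)^{g-2}(g+1)\;\le\; m(m-1)^{g-2}\deg_{V_{\red}}\Theta \;\le\; g!-\deg\calG \;\le\; g!-4,
$$
which is precisely the claimed estimate.

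I do not anticipate any genuine difficulty: the corollary is essentially a repackaging of Theorem~\ref{thm:multbound} with the cited theorem of Ran. The only points that warrant a line of care are the two I have flagged, namely the verification that $V$ generates $A$ (so that Ran's criterion is applicable at all), which rests on simplicity, and the passage from Ran's non-strict bound $\ge g$ to the strict bound $\ge g+1$, which uses the equality characterization of Jacobians via the Abel--Jacobi curve together with the integrality of the intersection number $\deg_{V_{\red}}\Theta$.
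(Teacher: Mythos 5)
Your proof is correct and follows the same route as the paper: isolate the summand for $V$ in Theorem~\ref{thm:multbound}, use simplicity to see that the curve $V_{\red}$ generates $A$ so that Ran's refinement of Matsusaka's criterion applies, and use the equality characterization (Jacobian with Abel--Jacobi curve) plus integrality to upgrade $\deg_{V_{\red}}\Theta\ge g$ to $\ge g+1$. The paper treats this as immediate from Ran's theorem combined with Theorem~\ref{thm:multbound}; your write-up just makes the same steps explicit.
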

In general for $d=\dim V>1$, Ran ~\cite[Corollary II.6]{Ran} shows that
$$
  \deg_V\Theta \geq \binom{g}{d}
$$
if $V$ is non-degenerate in the sense of that paper.

If the class of $V$ in $HA^{g-d,g-d}(A)$ is an integer multiple of the so-called minimal class $\theta_{g-d}:=\frac{1}{(g-d)!}\Theta^{g-d}$ (which must be the case if $HA^{g-d,g-d}(A)=\ZZ$), then we have the bound
$$
  \deg_V\Theta\geq \frac{g!}{(g-d)!}\,,
$$
by noticing that $V$ must then be a positive integral multiple of the minimal class, since it has positive intersection number with $\Theta^d$, we get in particular the following
\begin{cor}\label{bound_hodge}
For a $d$-dimensional irreducible component $V$ of  $\Sing(\Theta)$, denote by $m:=\mult_{V_{red}}\Theta$. If  $HA^{g-d,g-d}(A)=\ZZ$, then
$$
 m(m-1)^{g-d-1}  \frac{g!}{(g-d)!}  \leq g!-\deg \calG     \le g! -4\,.
$$
\end{cor}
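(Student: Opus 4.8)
The plan is to obtain Corollary~\ref{bound_hodge} as an immediate specialization of Theorem~\ref{thm:multbound}: I will keep only the single summand of the left-hand side attached to the given component $V$, and then replace $\deg_{V_{\red}}\Theta$ by its lower bound $\frac{g!}{(g-d)!}$ coming from the hypothesis $HA^{g-d,g-d}(A)=\ZZ$. All of the genuine work is already contained in Theorem~\ref{thm:multbound}; what is left is a short intersection-theoretic computation on $A$.

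First I would discard the unwanted terms. Every summand $m_i(m_i-1)^{g-d_i-1}\deg_{V_{i,\red}}\Theta$ in Theorem~\ref{thm:multbound} is non-negative, since $m_i\ge 2$ and since $\deg_{V_{i,\red}}\Theta>0$ by ampleness of $\Theta$. Dropping all summands except the one indexed by $V$ therefore yields
$$
m(m-1)^{g-d-1}\deg_{V_{\red}}\Theta \;\le\; g!-\deg\calG \;\le\; g!-4 .
$$

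Next I would bound $\deg_{V_{\red}}\Theta$ from below. The reduced variety $V_{\red}$ has a class $[V_{\red}]\in HA^{g-d,g-d}(A)$. The minimal class $\theta_{g-d}=\frac{1}{(g-d)!}\Theta^{g-d}$ is algebraic, integral and primitive in $H^{2(g-d)}(A,\ZZ)$, so under the hypothesis $HA^{g-d,g-d}(A)=\ZZ$ it is a generator; hence $[V_{\red}]=k\,\theta_{g-d}$ for some integer $k$. Intersecting with $\Theta^d$ and using $\Theta^g=g!$ gives
$$
\deg_{V_{\red}}\Theta=\Theta^d\cdot[V_{\red}]=\frac{k}{(g-d)!}\,\Theta^g=k\,\frac{g!}{(g-d)!}.
$$
As $V_{\red}$ is effective and $\Theta$ is ample, this number is strictly positive, forcing $k\ge 1$ and thus $\deg_{V_{\red}}\Theta\ge \frac{g!}{(g-d)!}$. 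Substituting this into the previous display proves the corollary.

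There is no serious obstacle here: the only point demanding a little care is the identity $[V_{\red}]=k\,\theta_{g-d}$ with $k$ a positive integer. This rests on two standard facts --- that the minimal class is primitive in integral cohomology, so that it genuinely generates $HA^{g-d,g-d}(A)$ (rather than being a proper multiple of the generator), and that $k>0$, which follows from effectivity of $V_{\red}$ together with ampleness of $\Theta$. Once these are recorded the bound drops out, confirming that the analytic heart of the matter was already settled in Theorem~\ref{thm:multbound}.
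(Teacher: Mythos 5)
Your overall route is exactly the paper's: keep the single summand of Theorem~\ref{thm:multbound} attached to $V$ (all summands are non-negative), and lower-bound $\deg_{V_{\red}}\Theta$ by $\frac{g!}{(g-d)!}$ via the minimal class, using the hypothesis $HA^{g-d,g-d}(A)=\ZZ$. The reduction to one summand, the intersection computation $\Theta^d\cdot k\,\theta_{g-d}=k\,\frac{g!}{(g-d)!}$, and the positivity argument forcing $k\ge 1$ are all fine.

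However, the one step you single out as ``demanding a little care'' is justified incorrectly. You assert, as a standard fact, that the minimal class $\theta_{g-d}=\frac{1}{(g-d)!}\Theta^{g-d}$ is \emph{algebraic}, and deduce that, being primitive, it must be the generator of $HA^{g-d,g-d}(A)$. Algebraicity of the minimal class on an arbitrary ppav is not a standard fact: it is a well-known open problem, known for Jacobians (Poincar\'e's formula) and for intermediate Jacobians of cubic threefolds, and expected to fail for a general ppav. Nor does the hypothesis $HA^{g-d,g-d}(A)=\ZZ$ force $\theta_{g-d}$ to be algebraic: a priori the generator of this group could be, say, $2\theta_{g-d}$. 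Fortunately, the conclusion you actually need --- that $[V_{\red}]$ is a \emph{positive integer} multiple of $\theta_{g-d}$, which is also what the paper asserts --- survives with a corrected justification that uses only primitivity. Let $\alpha$ generate $HA^{g-d,g-d}(A)\cong\ZZ$. Since $\Theta^{g-d}=(g-d)!\,\theta_{g-d}$ is algebraic, we have $\Theta^{g-d}=n\alpha$ for some nonzero integer $n$, so $\alpha$ is a rational multiple of $\theta_{g-d}$, say $\alpha=\frac{p}{q}\,\theta_{g-d}$ in lowest terms. Then $q\alpha=p\,\theta_{g-d}$, and choosing integers $a,b$ with $ap+bq=1$ gives $\theta_{g-d}=q\left(a\alpha+b\,\theta_{g-d}\right)$, contradicting primitivity of $\theta_{g-d}$ in $H^{2(g-d)}(A,\ZZ)$ unless $q=1$. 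Hence $\alpha$ is an integer (possibly proper) multiple of $\theta_{g-d}$, and therefore so is $[V_{\red}]$; ampleness of $\Theta$ makes that multiple positive, yielding $\deg_{V_{\red}}\Theta\ge\frac{g!}{(g-d)!}$. With this replacement your argument coincides with the paper's proof, which records precisely this claim (that $HA^{g-d,g-d}(A)=\ZZ$ forces $[V]$ to be an integral multiple of the minimal class) without asserting algebraicity of $\theta_{g-d}$ itself.
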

Note that the case of $d=g-2$ of the above corollary gives precisely corollary~\ref{smooth_cod_1}

\medskip
By applying Theorem~\ref{thm:multbound}, we can also obtain further results for the case of isolated singular points, extensively studied in the literature. For an isolated point $z$ of $\Sing(\Theta)$, let $B_z$ be the irreducible component of $\Sing(\Theta)$ supported at $z$, and let $\ee(z):=\mult_{B_z}\Theta$.  We first need the following proposition.
\begin{prop}\label{defect_ak}
If $z\in\Sing(\Theta)$ is an isolated double point such that the Hessian matrix of $\theta$ (second partial derivatives) at $z$ has rank at least $g-1$, then there exists a local coordinate system in which locally the theta function can be written as
$$
 \theta=x_1^2+\ldots+x_{g-1}^2+x_g^\ell
$$
for some $\ell\geq 2$. In this case $\ee(z)=\ell$.
\end{prop}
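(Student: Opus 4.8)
The plan is to bring $\theta$ into the stated normal form via the holomorphic splitting lemma, and then to read off $\ee(z)$ by a direct application of Proposition~\ref{CAtheorem}. I would work in the local ring $S := \calO_{A,z} \cong \CC\{x_1,\dots,x_g\}$. The hypothesis that $z$ is a double point means $\theta$ vanishes to order exactly two, so its leading term is the nonzero Hessian quadratic form $Q$, which by assumption has rank $g-1$ or $g$. I would then invoke the holomorphic Morse/splitting lemma: after a biholomorphic change of coordinates one may write $\theta = x_1^2+\dots+x_{g-1}^2 + h(x_g)$, where $h$ depends on $x_g$ alone and satisfies $h(0)=h'(0)=0$. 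When $\rank Q = g$ this is the classical Morse lemma and $h=x_g^2$; when $\rank Q = g-1$, the kernel of $Q$ is the $x_g$-line, the nondegenerate part is split off, and $h=O(x_g^3)$.

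Next I would use the isolatedness of $z$ to pin down $h$. If $h\equiv 0$, then $\Sing(\Theta)$ would contain the entire line $\{x_1=\dots=x_{g-1}=0\}$, contradicting that $z$ is an isolated point of $\Sing(\Theta)$. Since $\theta$ is holomorphic, $h$ must therefore vanish to some finite order $\ell\ge 2$, say $h=u(x_g)\,x_g^\ell$ with $u(0)\neq 0$. Substituting $x_g \mapsto x_g\,u(x_g)^{1/\ell}$ (a biholomorphism fixing $x_1,\dots,x_{g-1}$, with a chosen branch of the $\ell$-th root) absorbs the unit and all higher terms, producing exactly $\theta = x_1^2+\dots+x_{g-1}^2+x_g^\ell$.

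It remains to show $\ee(z)=\ell$. Set $R := \calO_{\Theta,z}=S/(\theta)$, a Cohen--Macaulay local ring of dimension $g-1$ with residue field $\CC$. The base scheme $B_z$ is cut out by the partial derivatives of $\theta$, so in $R$ its ideal is $\q=(x_1,\dots,x_{g-1},x_g^{\ell-1})$, which is $\m$-primary because $z$ is isolated. Applying Proposition~\ref{CAtheorem} to these $g$ generators gives $\ee(z)=\ee(\q,R)=\len\bigl(R/(s_1,\dots,s_{g-1})\bigr)$ for generic linear combinations $s_i$ of the generators. For a generic choice the linear parts of the $s_i$ have invertible coefficient matrix on $x_1,\dots,x_{g-1}$, so modulo $(s_1,\dots,s_{g-1})$ each $x_j$ becomes a scalar multiple of $x_g^{\ell-1}$; thus $S/(s_1,\dots,s_{g-1})\cong\CC\{x_g\}$. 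In this quotient $\theta$ becomes $x_g^\ell$ plus terms of order $2\ell-2\ge\ell$, hence a unit times $x_g^\ell$ (for $\ell=2$ one checks that the coefficient stays nonzero for generic $s_i$). Therefore $R/(s_1,\dots,s_{g-1})\cong\CC\{x_g\}/(x_g^\ell)$ has length $\ell$, and $\ee(z)=\ell$.

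The step I expect to carry the real content is the reduction to normal form: producing a genuinely holomorphic (not merely formal) coordinate change realizing the splitting lemma, and correctly using the isolatedness of $z$ to guarantee that the degenerate direction contributes a finite power $x_g^\ell$ rather than vanishing identically. By comparison the multiplicity computation is routine, its only delicate point being the genericity needed to collapse the quotient to the single variable $x_g$.
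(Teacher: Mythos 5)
Your proof is correct and takes essentially the same route as the paper: the normal form comes from the holomorphic Morse/splitting lemma, and $\ee(z)=\ell$ is computed by applying Proposition~\ref{CAtheorem} to the partial derivatives $x_1,\dots,x_{g-1},x_g^{\ell-1}$ and collapsing the quotient to $\CC\{x_g\}/(x_g^\ell)$. Your write-up is somewhat more detailed than the paper's (the isolatedness argument pinning down a finite $\ell$, and the genericity check of the coefficient when $\ell=2$), but the underlying argument is identical.
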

\begin{proof}
The existence of a local coordinate system where $\theta$ can be written as above follows immediately from the holomorphic Morse lemma~\cite[Thm.~2.26]{Zoladek}.

Let $s_i$ be as in Proposition~\ref{CAtheorem}; since we are interested just in the linear span of the $s_i$, applying Gauss elimination algorithm we can assume that $s_i=x_i$, for $i=1,\dots , g-2$ and $s_{g-1}=x_{g-1}+x_g^{\ell-1}$. Since
$$
\ee=\dim_{\mathbb{C}} S/(f, s_1,\dots , s_{n-d-1}),
$$
where $S$ is the local ring of the abelian variety at $z$, the statement follows.
\end{proof}
In genus 4 the proposition immediately gives the following corollary, which was obtained by completely different methods in~\cite{smva2}.
\begin{cor}
Let $(A,\Theta)\in\calA_4$ be a Jacobian of a smooth non-hyperelliptic curve with a semi-canonical $g_3^1$ (i.e.~with a theta-null). Then locally around the unique singular point $z$ of the theta divisor, the theta function can be written in an appropriate local coordinate system as $\theta=x_1^2+x_2^2+x_3^2+x_4^4$.
\end{cor}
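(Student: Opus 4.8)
The plan is to deduce the statement directly from Proposition~\ref{defect_ak} applied with $g=4$. To invoke that proposition I must check two things at the (unique) singular point $z$ of $\Theta$: first, that $z$ is an isolated double point whose Hessian has rank at least $g-1=3$; and second, that $\ee(z)=4$. Granting these, the proposition produces the normal form $\theta=x_1^2+x_2^2+x_3^2+x_4^\ell$ with $\ell=\ee(z)=4$, which is exactly the asserted local expression.

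I would begin by recalling the classical geometry behind the hypothesis. A non-hyperelliptic canonical curve $C$ of genus $4$ lies on a unique quadric $Q\subset\PP^3$; the presence of a semicanonical $g^1_3$ (equivalently, a vanishing theta-null) is precisely the condition that $Q$ be a rank-$3$ cone, the $g^1_3$ being cut out by its ruling. By the Riemann singularity theorem, $\Sing(\Theta)=W^1_3$, and in this situation $W^1_3$ consists of the single point $z$ corresponding to the translate of the semicanonical pencil; moreover $\mult_z\Theta=h^0(g^1_3)=2$, so $z$ is an isolated double point, giving the first half of what is needed.

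Next I would compute $\ee(z)$ using the earlier machinery. For a non-hyperelliptic genus-$4$ Jacobian the degree of the Gauss map is $\binom{2g-2}{g-1}=\binom{6}{3}=20$, so the discrepancy equals $\delta=g!-\deg\calG=24-20=4$. Because the base locus of $\calG$ is the single isolated point $z$, Remark~\ref{dim_zero} yields $\delta=\ee(z)$, and hence $\ee(z)=4$.

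It remains to establish the Hessian rank, which I expect to be the crux. Here I would use the Riemann--Kempf description of the tangent cone: at the double point $z$ the projectivized tangent cone to $\Theta$ inside $\PP(T_zA)\cong\PP^3$ is a quadric containing the canonically embedded curve $C$, hence it must be the unique quadric $Q$. Since $Q$ is a rank-$3$ cone, the quadratic part of $\theta$ at $z$, namely its Hessian, has rank exactly $3=g-1$ (consistently with $\ee(z)=4$, as a full-rank Hessian would give a node with $\ee(z)=2$). With the hypothesis of Proposition~\ref{defect_ak} thereby verified, the proposition gives $\theta=x_1^2+x_2^2+x_3^2+x_4^4$, completing the argument. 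The main obstacle is precisely this last point: the value $\ee(z)=4$ by itself only certifies that the singularity is deeper than a node, whereas applying Proposition~\ref{defect_ak} genuinely requires rank $\ge g-1$, and it is the special geometry of the vanishing theta-null --- its tangent cone being the rank-$3$ quadric cone through the canonical curve --- that supplies this.
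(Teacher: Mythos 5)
Your proof is correct and follows essentially the same route as the paper: compute $\delta = 4! - 20 = 4$ from the known degree of the Gauss map, verify the hypotheses of Proposition~\ref{defect_ak} at the unique singular point, and conclude $\ell = \ee(z) = 4$ via the equality of discrepancy and multiplicity for a zero-dimensional base locus. The only difference is that where the paper simply cites~\cite[p.~232]{acgh} for the facts that $\Sing(\Theta)_{\red}$ is a single point whose Hessian has rank $3$, you derive these from the Riemann singularity theorem and the Riemann--Kempf identification of the tangent cone with the unique (rank-$3$) quadric through the canonical curve --- the same classical content, unpacked rather than quoted.
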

\begin{proof}
Recall that the degree of the Gauss map for any non-hyperelliptic genus 4 Jacobian is $\binom{2\cdot 4-2}{4-1}=20=4!-4$, so the discrepancy $\de$ is equal to $4$.

By~\cite[p.~232]{acgh}, in this case $\Sing(\Theta)_{\red}=\lbrace p\rbrace$ and the rank of the Hessian of the theta function at $p$ is equal to $3$. The result now follows from Proposition~\ref{defect_ak} and Corollary~\ref{bound_disc}.
\end{proof}
\begin{rem}
Smith and Varley~\cite{smva2} prove that coordinates $x_i$ in the corollary above can furthermore be chosen to be equivariant, i.e.~such that they all change sign under the involution $z\mapsto -z$ of the ppav. This also follows from the above proof: indeed, in Proposition~\ref{defect_ak} note that if the point $z$ is two-torsion, i.e.~fixed by the involution, then since the theta divisor is also invariant under the involution, as a set, one can use the equivariant version of the holomorphic Morse lemma to ensure that the coordinates $x_i$ are equivariant under the involution as well.
\end{rem}
The proposition also immediately yields the following easy general bound, which seems not to have been previously known.
\begin{cor}
If the point $z$ is an isolated singular point of $\Theta$, and the rank of the Hessian of the theta function at $z$ is at least $g-1$, then, with the notation of Proposition~\ref{defect_ak}, we have $\ell\le g!-4$.
\end{cor}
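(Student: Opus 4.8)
The plan is to read this off directly from Proposition~\ref{defect_ak} together with the discrepancy bound of Corollary~\ref{bound_disc}, since these two already isolate the quantity $\ell$. First I would check that the hypotheses place us exactly in the situation of Proposition~\ref{defect_ak}. As $z\in\Sing(\Theta)$, the theta function vanishes to order at least $2$ at $z$; and since its Hessian at $z$ has rank at least $g-1$, the degree-two part of its Taylor expansion is nonzero, so the vanishing order is exactly $2$ and $z$ is an isolated double point. Proposition~\ref{defect_ak} then supplies the normal form $\theta=x_1^2+\cdots+x_{g-1}^2+x_g^\ell$ together with the crucial identity $\ee(z)=\ell$, where $\ee(z)=\mult_{B_z}\Theta$ is the Samuel multiplicity of $\Theta$ along the zero-dimensional component $B_z$ of $\Sing(\Theta)$ supported at $z$.

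Next I would apply Corollary~\ref{bound_disc} to $X=\Theta$, the ample line bundle $L=\calO(\Theta)|_\Theta$, and $f=\calG$, whose scheme-theoretic base locus is $\Sing(\Theta)$. Writing $\Sing(\Theta)=\cup_i B_i$ for the decomposition into irreducible components and $\ee_i=\mult_{B_i}\Theta$, the corollary gives
$$
 \delta\ \geq\ \sum_i \ee_i\,\deg_{B_{i,\red}}L\,,
$$
and every summand on the right is non-negative because $L$ is nef. I would then discard all summands except the one attached to $B_z$: there $B_{z,\red}=\{z\}$ is a reduced point, so $\deg_{B_{z,\red}}L=1$ and the summand equals $\ee(z)\cdot 1=\ell$. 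Hence $\ell\leq\delta$.

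Finally I would combine this with the universal estimate $\delta=g!-\deg\calG\leq g!-4$ established earlier, which is valid since $g\geq 3$ (this holds automatically here, as an indecomposable ppav whose theta divisor is singular has $g\geq 3$). This chain of inequalities gives $\ell\leq g!-4$, as claimed. I do not expect any genuine obstacle: the argument is a direct specialization of the machinery already in place, taking advantage of the fact that the discrepancy picks up the \emph{full} Samuel multiplicity $\ee(z)$ of a zero-dimensional component rather than merely the lower bound of Proposition~\ref{bound_Samuel}. The only point deserving a second glance is the reduction to Proposition~\ref{defect_ak}, namely that "isolated singular point with Hessian rank at least $g-1$" really forces $\mult_z\Theta=2$ --- which the nonvanishing of the rank-$(g-1)$ quadratic part settles at once.
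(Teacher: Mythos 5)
Your proof is correct and matches the paper's intended argument exactly: the paper states this corollary as an immediate consequence of Proposition~\ref{defect_ak} (which gives $\ee(z)=\ell$) combined with Corollary~\ref{bound_disc} and the bound $\delta\le g!-4$, which is precisely your chain $\ell=\ee(z)\le\delta\le g!-4$. Your preliminary check that the hypotheses force $\mult_z\Theta=2$, so that Proposition~\ref{defect_ak} applies, is a worthwhile detail the paper leaves implicit.
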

In a similar spirit, we can  also prove the following bound
\begin{cor}\label{bound_points}
A theta divisor contains at most $\frac{1}{2}(g!-4)$ isolated singular points.
\end{cor}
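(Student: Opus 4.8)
The plan is to read the bound off directly from Theorem~\ref{thm:multbound}, retaining only the contributions of the zero-dimensional strata of $\Sing(\Theta)$. First I would fix notation: let $z_1,\dots,z_N$ be the isolated singular points of $\Theta$ (there are finitely many, since they are exactly the zero-dimensional irreducible components of the closed subscheme $\Sing(\Theta)$). In the notation of Theorem~\ref{thm:multbound} each $z_i$ is a component $V_i$ with $d_i:=\dim V_i=0$. Being a singular point of the theta divisor, $z_i$ has multiplicity $m_i:=\mult_{V_{i,\red}}\Theta\ge 2$, and since $V_{i,\red}$ is a single reduced point we have $\deg_{V_{i,\red}}\Theta=\Theta^0\cdot V_i=1$.

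Next I would substitute these values into the left-hand side of the inequality of Theorem~\ref{thm:multbound}. The contribution of each isolated point is $m_i(m_i-1)^{g-d_i-1}\deg_{V_{i,\red}}\Theta=m_i(m_i-1)^{g-1}\ge 2\cdot 1^{g-1}=2$, using only $m_i\ge 2$. Because every summand appearing in Theorem~\ref{thm:multbound} is nonnegative, I may discard the terms coming from the positive-dimensional components and keep only the $N$ isolated points. This yields $2N\le\sum_{i\in I}m_i(m_i-1)^{g-d_i-1}\deg_{V_{i,\red}}\Theta\le g!-4$, and hence $N\le\tfrac12(g!-4)$, as claimed.

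The only place where any care is needed is the inequality $m_i\ge 2$, but this is immediate: $z_i\in\Sing(\Theta)$ means the theta function vanishes there to order at least two. Thus there is no genuine obstacle; the statement is a direct specialization of Theorem~\ref{thm:multbound} (it restates Corollary~\ref{iso_points}), and the entire content is the observation that each isolated singular point, having multiplicity at least $2$, already absorbs at least two units of the discrepancy $\delta=g!-\deg\calG\le g!-4$ of the Gauss map.
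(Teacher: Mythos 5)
Your proof is correct. With $d_i=0$, $\deg_{V_{i,\red}}\Theta=1$, and $m_i\ge 2$, each isolated point contributes at least $2$ to the left-hand side of Theorem~\ref{thm:multbound}, all other terms are nonnegative, and so $2N\le g!-4$; this is in fact exactly how the paper derives the identical statement of Corollary~\ref{iso_points} in the introduction. The paper's own proof of Corollary~\ref{bound_points}, however, takes a slightly different route to the factor $2$: instead of using the vanishing order $m_i\ge 2$ of the theta function, it bounds from below the Samuel multiplicity $\ee(z)=\mult_{B_z}\Theta$ of the zero-dimensional scheme-theoretic component $B_z$ of the base locus supported at $z$, by invoking the multiplicity-one criterion (a primary ideal in a local ring has Samuel multiplicity one if and only if the ring is regular and the ideal is the maximal ideal, see \cite[Proposition 7.2]{fultonbook} or \cite{MultOne}); since the local ring of $\Theta$ at a singular point is not regular, this gives $\ee(z)\ge 2$, and the count then follows from the discrepancy bound of Corollary~\ref{bound_disc}. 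The difference is thus which notion of multiplicity carries the estimate: your argument routes it through the inequality $\ee\ge m(m-1)^{g-d-1}$ of Proposition~\ref{bound_Samuel} (hidden inside Theorem~\ref{thm:multbound}), while the paper's argument obtains $\ee(z)\ge 2$ directly from commutative algebra, independently of any relation between Samuel multiplicity and vanishing order. Both are rigorous; yours is the more economical once Theorem~\ref{thm:multbound} is in hand, while the paper's makes transparent that every isolated point of the base locus of the Gauss map absorbs at least two units of the discrepancy $\delta=g!-\deg\calG$ purely because the local ring there fails to be regular.
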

\begin{proof}
For any isolated point $z\in\Sing(\Theta)$, we can apply the multiplicity one criterion, which states that a primary ideal in a local ring has Samuel multiplicity one if and only if the ring is regular and the ideal is the maximal ideal, to see that $\ee(z)\ge 2$; references for the criterion are~\cite[Proposition 7.2]{fultonbook} or~\cite{MultOne}. Then the corollary follows  from Theorem~\ref{thm:multbound}.
\end{proof}
Perhaps the next interesting case of ppav's with singular theta divisors beyond Jacobians of curves are intermediate Jacobians of smooth cubic threefolds. In this case the degree of the Gauss map can be computed from the geometric description given by Clemens and Griffiths~\cite{CG}, see~\cite[Remark 7 (b)]{Kramer}. Let us sketch the computation. In ~\cite{CG}, the authors consider the Fano surface $S$ of lines of a cubic 3-fold $F$ and a natural rational map:
\[
\Psi:S\times S \longrightarrow \Theta
\]
where $\Theta\subset J(F)$ is the theta divisor. They prove that $\Psi$ is generically $6$ to $1$,~\cite[page 348]{CG}. The composition $\calG\cdot \Psi$ associates to an ordered pair of lines $(\ell_1,\ell_2)\in S\times S$ the hyperplane  $H:= \langle\ell_1,\ell_2\rangle \subset \PP^4$. The degree of $\calG\cdot \Psi$ equals the number of ordered pairs of non-intersecting lines contained in the cubic surface $H \cap F$ for a general $H \in \PP^{4\vee}$. There are $27\times 16$ ordered pairs of non-intersecting lines on a nonsingular cubic surface; therefore the degree of $\calG$ is
\[
\deg(\calG) = \frac{\deg(\calG\circ\Psi)}{\deg(\Psi)}=\frac{27\times 16}{6} = 72
\]
We remark that since the degree of the Gauss map for Jacobians of genus 5 curves is either 16 (for hyperelliptic curves) or $\binom{8}{4}=70$ (for non-hyperelliptic curves), this computation already shows that the intermediate Jacobian of a smooth cubic threefold is not a Jacobian of a curve --- allowing one to bypass a longer argument for this in~\cite{CG}.

\smallskip
Our machinery gives a quick alternative computation for this degree.
\begin{prop}
The degree of the Gauss map of the intermediate Jacobian of a smooth cubic threefold is equal to 72.
\end{prop}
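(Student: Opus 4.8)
The plan is to run the discrepancy machinery directly on the theta divisor of $J(F)$, reducing the computation of $\deg\calG$ to the single number $\mult_z\Theta$, and then to extract this Samuel multiplicity from the tangent cone. First I recall the geometry due to Clemens and Griffiths~\cite{CG}: the intermediate Jacobian $(J(F),\Theta)$ is an indecomposable ppav of dimension $g=5$, the singular locus $\Sing(\Theta)$ consists of a single point $z$, the theta function $\theta$ has a triple point at $z$, and the projectivised tangent cone of $\Theta$ at $z$ is the cubic threefold $F\subset\PP^4$ itself. Since the base locus of the Gauss map equals $\Sing(\Theta)$ and is therefore supported at the single point $z$, Remark~\ref{dim_zero} gives $\de=\mult_z\Theta=\ee(z)$, and hence $\deg\calG=5!-\ee(z)=120-\ee(z)$. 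So everything reduces to showing $\ee(z)=48$.

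Next I would compute $\ee(z)$ as a colength. Let $(S,\m)$ be the regular local ring $\calO_{J(F),z}$ of dimension $5$; writing $\theta\in\m^3$ for a local equation of $\Theta$, the ideal of the base locus inside $R:=S/(\theta)$ is generated by the images of the partial derivatives $\partial_1\theta,\dots,\partial_5\theta\in\m^2$. By Proposition~\ref{CAtheorem} applied to the Cohen-Macaulay ring $R$ (exactly as in the proof of Corollary~\ref{bound_disc}), choosing four generic linear combinations $s_1,\dots,s_4$ of the partials we obtain
$$
 \ee(z)=\ell\bigl(R/(s_1,\dots,s_4)\bigr)=\dim_\CC S/(\theta,s_1,\dots,s_4).
$$
The leading (lowest-degree) forms of these five elements in $\operatorname{gr}_\m S\cong\CC[x_1,\dots,x_5]$ are the cubic $F$ and four quadrics $\tilde Q_1,\dots,\tilde Q_4$, where each $\tilde Q_j$ is a generic member of the five-dimensional linear system $\langle \partial_1F,\dots,\partial_5F\rangle$ spanned by the partials of $F$. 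The standard fact I would invoke is that if these leading forms form a regular sequence in $\CC[x_1,\dots,x_5]$, then so do $\theta,s_1,\dots,s_4$ in $S$, and the colength equals the product of the degrees, namely $3\cdot 2^4=48$, by B\'ezout.

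The heart of the argument, and the step I expect to be the main obstacle, is therefore to verify that $F,\tilde Q_1,\dots,\tilde Q_4$ have no common zero in $\PP^4$ for a generic choice of the combinations, equivalently that they form a regular sequence. Here I would use the gradient map $\nabla F=[\partial_1F:\dots:\partial_5F]\colon\PP^4\to\PP^4$, which is a finite morphism because $F$ is smooth (so the partials have no common zero) and $\nabla F^*\calO(1)=\calO(2)$ is ample. A common zero $p$ of $\tilde Q_1,\dots,\tilde Q_4$ is exactly a point with $\nabla F(p)=q^*$, where $q^*\in\PP^4$ is the single point cut out by the four generic linear forms defining the combinations; by Euler's relation $\sum_i x_i\,\partial_iF=3F$ one has $F(p)=0$ precisely when $p\cdot q^*=0$, i.e.~exactly when $p$ lies on the cubic $\{F=0\}$ itself. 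Thus the set of bad parameters $q^*$ (those for which some preimage lies on $\{F=0\}$) is the image $\nabla F(\{F=0\})$, which has dimension at most $\dim F=3$ and is therefore a proper closed subvariety of $\PP^4$.

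For generic combinations the point $q^*$ avoids this bad locus, so the leading forms $F,\tilde Q_1,\dots,\tilde Q_4$ form a regular sequence and the colength above equals $3\cdot 2^4=48$. Plugging back yields $\ee(z)=48$ and hence $\deg\calG=120-48=72$, recovering the Clemens-Griffiths count. The only genuinely delicate point is the transversality claim of the previous paragraph; once the Euler relation is used to identify the bad locus with $\nabla F(\{F=0\})$, dimension counting with the finiteness of $\nabla F$ closes the argument.
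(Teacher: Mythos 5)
Your proof is correct and follows essentially the same route as the paper's: reduce to the Samuel multiplicity via Remark~\ref{dim_zero}, compute it via Proposition~\ref{CAtheorem} as the colength $\dim_\CC S/(\theta,s_1,\dots,s_4)$, and evaluate that colength as $3\cdot 2^4=48$ because the leading forms (the cubic and four quadrics from the span of its partials) intersect properly. The only difference is one of detail: where the paper dispatches the proper-intersection step with the single remark that smoothness of the cubic makes its partial derivatives independent, you supply a full genericity argument via the Euler relation and the gradient map $\nabla F$ --- a worthwhile elaboration, since four \emph{specific} partials can in fact fail (precisely when the corresponding hyperplane section of $F$ is singular), so the genericity of the linear combinations built into Proposition~\ref{CAtheorem} is genuinely needed.
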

\begin{proof}
It is well-known~\cite{beauville} that $B=\Sing(\Theta)$ is supported on a single point $z$, which has multiplicity $3$; the tangent cone of that singularity is the cubic threefold. In this case, as explained in Remark~\ref{dim_zero}, the discrepancy $\delta$ equals the multiplicity $\ee$ of $\Theta$ along $B$.

We use Proposition~\ref{CAtheorem} to compute $\ee$. Locally near its singularity, the theta function is a cubic polynomial defining the cubic, plus higher order terms. Since the cubic is smooth, its partial derivatives are independent, and thus in the setup of Proposition~\ref{CAtheorem} we can choose $s_i$ to be the theta function (which vanishes at $z$ to order 3), and four of its partial derivatives (each of which vanishes to second order). Thus we compute the Samuel multiplicity of $z$, or equivalently $\deg_B\Theta$, to be equal to $3\cdot 2^4=48$. It follows that the discrepancy is equal to 48. Thus the degree of the Gauss map in this case is $\deg\calG=5!-48=72$.
\end{proof}

\section{The stratification by the degree of the Gauss map}
The moduli space of ppav's $\calA_g$ is an orbifold of complex dimension $g(g+1)/2$. The classical Andreotti-Mayer loci $N_k^{(g)}$ are defined to be the loci of ppav such that $\dim\Sing(\Theta)\ge k$. These loci are closed. In analogy with the Andreotti-Mayer loci, we define the Gauss loci.
\begin{df}[Gauss loci]
For any $d\in\ZZ_{\ge 0}$ we let $G_d^{(g)}\subseteq\calA_g$ be the set of all ppav's for which the degree of the Gauss map is less than or equal to $d$.
\end{df}
For a decomposable ppav, the Gauss map is not dominant, so the degree of the Gauss map is $0$; in other words $\calA_g^{\op{dec}}\subset G_0^{(g)}$, where we denoted by $\calA_g^{\op{dec}}$ the locus of decomposable ppav's. By definition we have $G_0^{(g)}\subseteq G_1^{(g)}\subseteq\ldots\subseteq G_{g!}^{(g)}=\calA_g$. Since $\deg\calG$ is always even, $G_{2d}^{(g)}=G_{2d+1}^{(g)}$ for any $d$. For $g\geq 3$, since the degree of the Gauss map on an indecomposable ppav is at least 4, $G_0^{(g)}=G_1^{(g)}=G_2^{(g)}=G_3^{(g)}=\calA_g^{\op{dec}}$.

\medskip
We now discuss in detail the Gauss loci in low genus. In genus $g=2$ the theta divisor of any indecomposable principally polarised abelian surface is smooth, and the degree of the Gauss map is equal to two. To summarise,
$$
 G_0^{(2)}=\calA_2^{\op{dec}};\qquad G_2^{(2)}=\calA_2.
$$

\smallskip
In genus $3$ the theta divisor of any non-hyperelliptic Jacobian is smooth, while the theta divisor of a hyperelliptic Jacobian has a unique singular point, which is an ordinary double point. Thus
$$
 G_0^{(3)}=G_2^{(3)}=\calA_3^{\op{dec}};\qquad G_4^{(3)}=\overline{\calH}_3;\qquad G_6^{(3)}=\calA_3,
$$
where $\calH_3$ denotes the locus of hyperelliptic Jacobians, and we note that the boundary of $\calH_3$ in $\calA_3$ is in fact equal to the decomposable locus.
We note that in this case the Gauss loci are equal to the Andreotti-Mayer loci, but already in genus $4$ this is not the case.

\smallskip
We now turn to genus 4. Recall that $N_1^{(4)}=\overline\calH_4$, and the degree of the Gauss map for a Jacobian of a smooth hyperelliptic genus 4 curve is equal to $2^{4-1}=8$ --- this is to say $\calH_4\subset G_8^{(4)}\setminus G_6^{(4)}$. We also recall from~\cite{beauvillegenus4} that $N_0^{(4)}=\calJ_4\cup\theta_{\rm null}$, where $\calJ_4$ denotes the locus of Jacobians, and $\theta_{\rm null}$ denotes the theta-null divisor --- the locus of ppav's with a singular 2-torsion point on the theta divisor. Recall also that the degree of the Gauss map for any non-hyperelliptic genus 4 Jacobian is equal to 20. Furthermore, as is well-known, and as also immediately follows from our bound for the discrepancy, an indecomposable 4-dimensional ppav cannot have an isolated singular point $z$ on the theta divisor with $m=\mult_z\Theta>2$ (since $3\cdot 2^3=24>20=4!-4$). Finally,  in~\cite{grsmJac4} it was shown that if the theta divisor of a 4-dimensional indecomposable ppav has a double point that is not ordinary, it is the Jacobian of a curve. To summarize (and further using~\cite{beauvillegenus4}), if a 4-dimensional indecomposable ppav that is not a Jacobian has a singular theta divisor, the only singularities of it are at two-torsion points (vanishing theta-nulls), and each of those is an ordinary double point.

We can now describe the Gauss loci in genus 4.
\begin{prop}\label{prop:genus4close}
\begin{enumerate}
\item[(1)] All the Gauss loci $G_{2d}^{(4)}$ are closed.
\item[(2)] All the Gauss loci $G_{2d}^{(4)}$, for $d=2,\ldots,12$ are distinct.
\item[(3)] The locus $G_4^{(4)}$ is equal to the union of the locus of decomposable ppav's $\calA_4^{\op{dec}}=G_0^{(4)}$ (which has two irreducible components $\calA_1\times\calA_3$, of dimension 7, and $\calA_2\times\calA_2$ of dimension 6), and one point $\lbrace A_V\rbrace$, the fourfold discovered by Varley in~\cite{varleyweddle}.
\item[(4)] The 9-dimensional locus $\calJ_4$ is an irreducible component of $G_{20}^{(4)}$, which has another 8-dimensional irreducible component.
\item[(5)] The 7-dimensional locus $\calH_4$ is an irreducible component of $G_8^{(4)}$, which has another 2-dimensional irreducible component.
\end{enumerate}
\end{prop}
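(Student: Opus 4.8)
The plan is to first convert $\deg\calG$ into the combinatorics of vanishing theta-nulls, then to deduce closedness from a semicontinuity principle, and finally to control dimensions and components by a local study at Varley's fourfold.

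\emph{A dictionary.} By the classification recalled just before the statement, the theta divisor of an indecomposable non-Jacobian $(A,\Theta)\in\calA_4$ has only finitely many ordinary double points, each at an even two-torsion point --- a vanishing theta-null. Such a point has $\mult_z\Theta=2$ and, by Proposition~\ref{defect_ak} with $\ell=2$, Samuel multiplicity $\ee=2$; hence by Remark~\ref{dim_zero} the discrepancy is $\delta=2k$, with $k$ the number of vanishing theta-nulls, so $\deg\calG=24-2k$. Since $\deg\calG\ge4$ by the Lemma above, $k\le 10$. On the remaining loci $\deg\calG=20$ on non-hyperelliptic Jacobians (whose singular scheme $W^1_3$ always has length $2$, giving $\delta=4$), $\deg\calG=8$ on hyperelliptic Jacobians, and $\deg\calG=0$ on $\calA_4^{\op{dec}}$. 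I record that for an even two-torsion $\mu$ one has $\nabla\theta(\mu)=0$ automatically by parity, so being a vanishing theta-null at $\mu$ is the single closed condition $\theta(\mu)=0$; writing $\theta_{\rm null}^{(k)}$ for the locus of ppav with at least $k$ vanishing theta-nulls, this locus is closed of codimension $\le k$, with $\theta_{\rm null}^{(1)}=\theta_{\rm null}$ and $\theta_{\rm null}^{(11)}=\varnothing$.

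\emph{Closedness (1).} I would deduce this from the lower semicontinuity of the degree of a dominant generically finite rational map in a family: under specialization the degree can only drop, since the part $\delta=g!-\deg\calG$ of $\deg_\Theta L$ absorbed by the base locus $\Sing(\Theta)$ can only grow as the singularities degenerate. In genus $4$ this is compatible with the dictionary --- a vanishing theta-null is a closed condition, so nodes persist in limits and $k$ is upper semicontinuous, while no indecomposable non-Jacobian with $k\ge3$ nodes can specialize to a non-hyperelliptic Jacobian, whose length-$2$ singular scheme can absorb at most two two-torsion nodes. Granting lower semicontinuity, every $G_{2d}^{(4)}=\{\deg\calG\le 2d\}$ is closed.

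\emph{The engine: transversality at $A_V$.} The core of (2)--(5) is a local computation at the fourfold $A_V$ of~\cite{varleyweddle}, carrying the maximal $k=10$ nodes ($\deg\calG=4$). Via the heat equation, the tangent space to $\theta_{\rm null}$ at a ppav with a node at even $\mu$ is the hyperplane of $T\calA_4\cong\Sym^2H$ (with $H=T_0A$) annihilated by $\operatorname{Hess}\theta(\mu)\in\Sym^2H^\vee$. I would prove that the ten Hessians at the ten nodes of $A_V$ are linearly independent in the ten-dimensional $\Sym^2H^\vee$, i.e.\ that the ten branches of $\theta_{\rm null}$ through $A_V$ meet transversally. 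Then near $A_V$ the stratum $\theta_{\rm null}^{(k)}$ is a transverse union of smooth sheets of codimension $k$, which together with the free bound $\operatorname{codim}\theta_{\rm null}^{(k)}\le k$ gives $\dim_{A_V}\theta_{\rm null}^{(k)}=10-k$. A general point of the sheet through $A_V$ then has exactly $k$ nodes and is indecomposable and non-Jacobian (open conditions satisfied at $A_V$), realizing $\deg\calG=24-2k$; letting $k$ run over $0,\dots,10$ produces fourfolds of every even degree from $24$ down to $4$, proving (2).

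\emph{Components and the main obstacle.} For (3), transversality shows $A_V$ is an isolated reduced point of $\theta_{\rm null}^{(10)}$; with Varley's uniqueness and $G_4^{(4)}=\calA_4^{\op{dec}}\cup\theta_{\rm null}^{(10)}$ this gives $G_4^{(4)}=\calA_4^{\op{dec}}\cup\{A_V\}$, the decomposable part splitting as $\calA_1\times\calA_3$ and $\calA_2\times\calA_2$. For (4) and (5), $\overline\calJ_4$ (dimension $9$) and $\overline\calH_4$ (dimension $7$) are components of $G_{20}^{(4)}$ and $G_8^{(4)}$ by a local dimension count, while the sheet through $A_V$ supplies a further component of dimension $8$, respectively $2$; that $\dim\theta_{\rm null}^{(2)}\le8$ follows from its being a proper closed subset of the irreducible divisor $\theta_{\rm null}$. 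The two inputs I have quarantined are the main obstacles: (i) the explicit transversality at $A_V$, namely that the ten node-Hessians span $\Sym^2H^\vee$, which I would check from Varley's explicit equations; and (ii) showing that $\theta_{\rm null}^{(2)}$ and $\theta_{\rm null}^{(8)}$ each have a unique top-dimensional irreducible component, so that the ``other'' components in (4) and (5) are well defined and irreducible --- this I expect to require a monodromy analysis of configurations of even theta characteristics, or a direct irreducible parametrization of these loci.
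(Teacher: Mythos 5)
Your proposal has two genuine gaps, one of which is fatal as written. The serious one is part (1). The ``lower semicontinuity of the degree of a generically finite rational map in a family'' that you invoke is not an available general principle: it is precisely the content of Conjecture~\ref{conj} of the paper (closedness of the Gauss loci for all $g$), which is open in general, so appealing to it here is circular. Your genus-4 fallback --- vanishing theta-nulls persist in limits, so $k$ is upper semicontinuous --- does not close the gap, because the dictionary $\deg\calG=24-2k$ holds only off the Jacobian locus. The dangerous degeneration is a family of indecomposable non-Jacobians with $k\ge 9$ nodes (hence $\deg\calG\le 6$) specializing to a smooth hyperelliptic Jacobian: such a Jacobian itself carries $10$ vanishing theta-nulls yet has $\deg\calG=8$, so upper semicontinuity of $k$ is perfectly consistent with this limit, and it would violate closedness of $G_6^{(4)}$. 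Your proposal only rules out non-hyperelliptic Jacobian limits (the easy case, since their singular locus consists of at most two points). The paper's proof needs theta-characteristic combinatorics exactly here: every triple among the ten theta-nulls of a smooth hyperelliptic Jacobian is azygetic (see \cite{mumf2}), so if the nine constants vanishing on the family contain a syzygetic triple, that triple vanishes on the limit and excludes it from being smooth hyperelliptic; while if the nine form an azygetic tuple, Igusa's lemma \cite{Igusa} forces the members of the family themselves to be hyperelliptic Jacobians or products, contradicting $\deg\calG\le 6$. Without an argument of this kind, (1) --- and in particular the closedness of $G_6^{(4)}$ --- is unproven.

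The second gap is your quarantined obstacle (i): the linear independence of the ten Hessians at the nodes of $A_V$, which carries your parts (2)--(5), is left unproven. It is also avoidable. Debarre's uniqueness theorem \cite{Debarreannulationdimension4} (note: the uniqueness is Debarre's, not Varley's) implies that near $A_V$ the common zero locus on the level cover $\calA_4(4,8)$ of the ten relevant theta constants is the single point $A_V$, i.e.\ the ten hypersurfaces meet in the expected codimension $10$; it then follows formally that any sub-collection of $k$ of them meets in expected codimension $k$ near $A_V$, since otherwise intersecting with the remaining $10-k$ hypersurfaces would yield codimension strictly less than $10$. This is the paper's route, and it produces the $(10-k)$-dimensional loci of ppav's with exactly $k$ vanishing theta-nulls with no Hessian computation. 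Finally, your obstacle (ii) is spurious: the proposition asserts the \emph{existence} of another $8$-dimensional (resp.\ $2$-dimensional) irreducible component of $G_{20}^{(4)}$ (resp.\ $G_8^{(4)}$), not its uniqueness. Since near $A_V$ the locus $G_{20}^{(4)}$ is a finite union of codimension-$2$ sheets and $A_V\notin\overline\calJ_4$, every irreducible component of $G_{20}^{(4)}$ through $A_V$ has dimension exactly $8$ and is distinct from $\overline\calJ_4$; irreducibility of a component is automatic, so no monodromy analysis of theta characteristics is needed.
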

\begin{proof}
Indeed,  the above results imply that for a 4-dimensional indecomposable ppav that is not a Jacobian, the only possible singularities of the theta divisor are theta-nulls that are ordinary double points, each of which contributes precisely two to the discrepancy by Proposition~\ref{defect_ak}. The degree of the Gauss map is then $24-2k$, where $k$ is the number of vanishing theta constants. Since the vanishing of an even theta constant defines a divisor on a level cover $\calA_4(4,8)$ of $\calA_4$, the locus where a given collection of theta constants vanishes is always closed in $\calA_4(4,8)$, and thus its image is closed in $\calA_4$. Thus all the Gauss loci are closed within $\calA_4\setminus N_1^{(4)}$. Since $N_1^{(4)}=\overline\calH_4\cup\calA_4^{\op{dec}}\subset G_8^{(4)}$, it follows that $G_{2d}^{(4)}$ are closed for $d\ge 4$, and to prove (1) it remains to show that $G_4^{(4)}$ and $G_6^{(4)}$ are closed. The statement about $G_4^{(4)}$  follows from (3), which we will prove shortly. To prove that  $G_6^{(4)}$ is closed, all we need to show is that a family in $G_6^{(4)}$ can not degenerate to a smooth hyperelliptic Jacobian --- which lies in $G_8^{(4)}\setminus G_6^{(4)}$. We will do this once we are done with the rest of the proof.

To show that all Gauss loci are distinct, recall that
Varley in~\cite{varleyweddle} constructed an indecomposable principally polarised abelian fourfold $A_V$, which is not a Jacobian, but has 10 vanishing even theta-nulls (and note that the above discussion reproves that a 4-dimensional indecomposable ppav can have at most $k=10$ vanishing theta constants). Thus the degree of the Gauss map for $A_V$ is equal to $4$, so that  $A_V\in G_4^{(4)}$. Debarre~\cite{Debarreannulationdimension4} showed that $A_V$  is the unique indecomposable ppav in $\calA_4$ such that the singular locus of the theta divisor consists of $10$ isolated points; in our language, this implies (3), which also shows that the locus $G_4^{(4)}$ is closed. Thus locally on $\calA_4(4,8)$ the point $A_V$ is defined by the vanishing of the corresponding 10 theta constants. The vanishing locus of each theta constant is a hypersurface in $\calA_4(4,8)$, and since all 10 of them locally intersect in expected codimension 10, any subset also locally intersects in expected codimension. Thus for any $k\le 10$, there is a $(10-k)$-dimensional locus of ppav's near $A_V$ with exactly $k$ vanishing theta constants (and which do not lie on $\calJ_4$, since $A_V$ is not a Jacobian). Thus locally near $A_V$ there exist ppav's with the degree of the Gauss map equal to $24-2k$, for any $k\le 10$.

Furthermore, by the above we see that there exists an irreducible $(10-k)$-dimensional component of $G_{24-2k}^{(4)}$ containing $A_V$. Taking $k=2$ and $k=8$ shows the existence of irreducible components of $G_{20}^{(4)}\setminus\overline\calJ_4$ and of $G_8^{(4)}\setminus\overline\calH_4$ of the claimed dimensions. Furthermore, we note that the boundary of $\calH_4$ within $\calA_4$ is equal to
$$
(\overline\calH_2\times\overline\calH_2)\cup(\overline\calH_1\times\overline\calH_3)=(\calA_2\times\calA_2)\cup (\calA_1\times\calH_3)\subsetneq\calA_4^{\op{dec}}=(\calA_2\times\calA_2)\cup(\calA_1\times\calA_3).
$$
Recalling that $\overline\calJ_4$ and $\overline\calH_4$ are irreducible, and that the Gauss map on these loci generically has degree 20 and 8, respectively, proves parts 4 and 5.

Finally, we return to statement (1). Suppose we have a family $A_t$ of ppav's contained in $G_6^{(4)}$, degenerating to some $A_0\in\overline\calH_4$. We want to prove that $A_0$ cannot be a smooth hyperelliptic Jacobian --- which is equivalent to showing that $A_0\in\calA_4^{\op{dec}}$. Indeed, a generic $A_t$ must be a ppav that has at least 9 vanishing theta constants, and thus the same configuration of 9 theta constants must also vanish on $A_0$. It is known that for any smooth hyperelliptic Jacobian there are exactly 10 vanishing even theta constants, every three of them forming an azygetic triple (this results seems to go back to Riemann originally, we refer to \cite{mumf2} for a detailed discussion). Thus if among the 9 vanishing theta constants on $A_t$ there exists a syzygetic triple, then a syzygetic triple of theta constants vanishes at $A_0$, and so $A_0$ cannot be a smooth hyperelliptic Jacobian. On the other hand, if an azygetic 9-tuple of theta constants vanishes on $A_t$, then $A_t$ is a hyperelliptic Jacobian or a product, by~\cite[Lemma 6]{Igusa}. But then since $A_t\in G_6^{(4)}$, it follows that $A_t\in\calA_4^{\op{dec}}$.
\end{proof}
In genus 5, if the theta divisor is smooth, the degree of the Gauss map is equal to 120. For intermediate Jacobians of cubic threefolds the degree of the Gauss map is $72$, for the Jacobian of any non-hyperelliptic curve it is equal to $70$, while for the Jacobian of any hyperelliptic curve it is equal to $16$. In this case our results show that if all singular points of theta divisor are isolated (i.e.~on $N_0^{(5)}\setminus N_1^{(5)}$), there are at most $(5!-4)/2=58$ such singular points --- which in particular is much better than the conjectured bound of $2^4(2^5+1)-3^5=285$ for the number of vanishing even theta constants in this case (see~\cite{aufpism} for the recent results on this conjecture).

All irreducible components of the Andreotti-Mayer locus $N_1^{(5)}$ were described explicitly by Debarre~\cite[Prop.~8.2]{debarreAMloci} via the Prym construction, using the fact that the Prym map is dominant onto $\calA_5$. While it appears possible to compute the degree of the Gauss map for a generic point of every irreducible component of $N_1^{(5)}$, determining the degree of the Gauss map for every point of $N_1^{(5)}$ would require computing the degree of the Gauss map for Pryms of all admissible covers that arise as degenerations, which involves a large number of cases, and which we thus do not undertake. Furthermore, we have $G_{16}^{(5)}\supset \calH_5$, and Debarre showed that  $N_2^{(5)}\setminus\calA_5^{\op{dec}}=\calH_5$~\cite[Section 1]{cilvdg1}, while the results of Ein and Lazarsfeld~\cite{EinLaz} are that $N_3^{(5)}=\calA_5^{\op{dec}}$, which is thus equal to $G_4^{(5)}$. We do not know whether various Gauss loci are distinct.

\smallskip
In higher genus, other than the classical case of Jacobians, the degree of the Gauss map is known for a generic Prym: Verra~\cite{Verra} showed that it is equal to $D(g)+2^{g-3}$, where $D(g)$ is the degree of the variety of all quadrics of rank at most $3$ in $\PP^{g-1}$. We note that for $g=5$ a generic ppav is a Prym, with a smooth theta divisor, and thus the result of Verra recovers the degree $5!=120$ of the Gauss map in this case.

\end{document}